\numberwithin{equation}{section}
\renewcommand{\phi}{\varphi}
\theoremstyle{definition}
\newtheorem{Def}{Definition}[section]
\theoremstyle{plain}
\newtheorem{prop}[Def]{Proposition}
\newtheorem{cor}[Def]{Corollary}
\newtheorem{thm}[Def]{Theorem}
\newtheorem{lem}[Def]{Lemma}
\theoremstyle{remark}
\newcommand{\norm}[1]{\left\|#1\right\|}
\newcommand{\abs}[1]{\left|#1\right|}
\DeclareMathOperator{\divergence}{div}
\DeclareMathOperator{\curl}{curl}
\newcommand{\e}{\varepsilon}
\newcommand{\R}{\mathbb{R}}
\newcommand{\N}{\mathbb{N}}
\newcommand{\D}{\mathcal{D}}
\newcommand{\Tr}{{\rm Tr}}
\newcommand{\T}{\mathbb{T}}
\newcommand{\Spt}{\text{Spt}}
\renewcommand{\MR}[1]{} 
\begin{document}

\title[The helicity distribution] {The helicity distribution for the 3D incompressible Euler equations}

\author[M.~Inversi]{Marco Inversi}
\address[M.~Inversi]{Departement Mathematik und Informatik, Universität Basel, Spiegelgasse~1, 4051 Basel, CH} 
\email{marco.inversi@unibas.ch}

\author[M.~Sorella]{Massimo Sorella}
\address[M.~Sorella]{Department of Mathematics, Imperial College London, SW7 2AZ, London, UK}
\email{m.sorella@imperial.ac.uk}

\keywords{Incompressible Euler equations, helicity, Duchon--Robert distribution, boundary.}

\subjclass[2020]{35Q31, 35Q35, 35D30}

\begin{abstract}
This paper is concerned with the helicity associated to solutions of the 3D incompressible Euler equations. We show that under mild conditions on the regularity of the velocity field of an incompressible ideal fluid it is possible to define a defect distribution describing the local helicity balance. Under suitable regularity assumptions, we also provide the global helicity balance on bounded domains in terms of the boundary contributions of the vorticity, velocity and pressure.  
\end{abstract}


\maketitle

\section{Introduction} 

In this paper we consider the Euler equations in $\Omega \times (0,T)$ for the velocity field $u$ of an ideal incompressible fluid in a spatial domain $\Omega$, that is 
\begin{align} \label{Euler} \tag{E}
    \begin{cases}
        \partial_t u + \divergence(u \otimes u) + \nabla p = 0 & (x,t ) \in \Omega \times (0,T),  
        \\ \divergence(u) = 0 & (x,t) \in \Omega \times (0,T). 
    \end{cases}
\end{align}  
Here $\Omega$ is either $\T^3, \R^3$ or any bounded domain $\Omega \subset \R^3$ with Lipschitz boundary. If $\partial \Omega \neq \emptyset$, the system \eqref{Euler} is coupled with the impermeability boundary condition $u(t) \cdot n = 0$ at $\partial \Omega$ for any $t \in (0,T)$, the latter to be understood in a suitable way according to the regularity of the velocity field. Throughout this note, $n$ is the outer unit normal to $\partial \Omega$. Weak solutions to \eqref{Euler} are defined in the usual distributional sense. 

\begin{Def} \label{d: weak solution}
Let $\Omega \subset \R^3$ be any open set and $u \in L^2_{\rm loc}(\Omega \times (0,T)), p \in L^1_{\rm loc}(\Omega \times (0,T))$. We say that $(u,p)$ is a weak solution to \eqref{Euler} if 
\begin{align}
    \int_0^T \int_{\Omega} \left[ \partial_t \phi \cdot u + u \otimes u \colon \nabla \phi + p \divergence(\phi) \right] \, dx \, dt & = 0 \qquad \forall \phi \in C^\infty_c(\Omega \times (0,T); \R^3), 
    \\ \int_\Omega u(x,t) \cdot \nabla \psi(x) \, dx & = 0 \qquad \forall \psi \in C^\infty_c(\Omega), \, \text{ for a.e. } t \in (0,T). 
\end{align}
\end{Def}

The boundary condition for weak solutions according to \cref{d: weak solution} can be interpreted in the sense of normal distributional traces. Indeed, if $\Omega$ is a bounded open set and $u(\cdot,t) \in L^1(\Omega)$ for a.e. $t$, the system \eqref{Euler} is complemented with the \emph{impermeability boundary condition} that is 
\begin{equation} \label{eq: impermeability condition}
    \int_{\Omega} u(x,t) \cdot \nabla \psi(x) \, dx \qquad \forall \psi \in C^\infty_c(\R^3), \, \text{ for a.e. } t \in (0,T). 
\end{equation}
By the divergence theorem, it is clear  that \eqref{eq: impermeability condition} is equivalent to $u(\cdot, t) \cdot n = 0$ at $\partial \Omega$ whenever $u(\cdot, t)$ is smooth up to the boundary and divergence-free. Moreover, given a regular solution $u$ to \eqref{Euler}, it is well-known that the vorticity $\omega : = \curl(u)$ satisfies 
\begin{equation} \label{euler vorticity} \tag{V}
    \partial_t \omega + (u \cdot \nabla) \omega - (\omega \cdot \nabla) u = 0 \qquad (x,t) \in \Omega \times (0,T) 
\end{equation} 
and, letting the \emph{helicity} be the scalar product between the velocity and the vorticity, a direct computation shows that 
\begin{equation}
    \partial_t ( u \cdot \omega) + \divergence \left( (u \cdot \omega) u + \left( p - \frac{\abs{u}^2}{2} \right) \omega  \right) = 0 \qquad (x,t) \in \Omega \times (0,T). \label{local helicity smooth}
\end{equation}
Motivated by Duchon--Robert \cite{DR00}, the main goal of this paper is to show that \eqref{local helicity smooth} can be defined for weak solutions of \eqref{Euler} up to a defect distribution $D[u]$, under mild regularity assumptions on the velocity field. Moreover, the distribution $D[u]$ can be characterized as a weak limit of regular functions, as for the Duchon--Robert measure describing the possible failure of the local energy balance. We state the first  result of this paper (see \cref{ss: fractional sobolev space} for a precise definition of the function spaces involved). 

\begin{thm}[Duchon--Robert type helicity distribution] \label{th:main-helicity}
Let $\Omega \subset \R^3$ be any open set and let $(u,p)$ be a weak solution to \eqref{Euler} in $\Omega \times (0,T)$ according to \cref{d: weak solution} with $u\in L^2_{\rm loc} ( (0,T) ; H^{\sfrac{1}{2}}_{\rm loc} (\Omega)) \cap L^\infty_{\rm loc}( \Omega \times (0,T))$. Then, there exists $D[u] \in \D'(\Omega \times (0,T))$ such that 
\begin{equation}
    \partial_t ( u \cdot \omega) + \divergence \left( (u \cdot \omega) u + \left( p - \frac{\abs{u}^2}{2} \right) \omega  \right) = - D[u] \qquad \text{ in } \mathcal{D}'(\Omega \times (0,T)). \label{local helicity rough}
\end{equation}
Furthermore, letting $u_\e, (u \otimes u)_\e, \omega_\e$ be the spatial mollifications of $u, u \otimes u, \omega$ respectively, it holds 
\begin{equation}
    D[u] = 2 \lim_{\e \to 0}  \nabla \omega_\e : R_\e,
\end{equation} 
where $ R_\e : = (u_\e \otimes u_\e) - (u \otimes u)_\e $ and the limit is intended in $\mathcal{D}'(\Omega \times (0,T))$. 
\end{thm}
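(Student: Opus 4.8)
The plan is to run the classical Duchon--Robert scheme: mollify \eqref{Euler} in the space variable, reproduce the pointwise computation leading to \eqref{local helicity smooth} at the regularized level while retaining the commutator error, and then pass to the limit. Let $\rho_\e$ be a standard even spatial mollifier and set $u_\e = u * \rho_\e$, $p_\e = p * \rho_\e$, $\omega_\e = \curl u_\e = \omega * \rho_\e$. Since $u\in L^\infty_{\loc}$ and $p \in L^1_{\loc}$, the field $u_\e$ is smooth in $x$ and, because $\partial_t u_\e = -\divergence((u\otimes u)_\e) - \nabla p_\e$ belongs to $L^1_{\loc}((0,T);C^\infty_{\loc})$, it is absolutely continuous in time with smooth values, so products may be differentiated. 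Writing $R_\e = u_\e\otimes u_\e - (u\otimes u)_\e$, the mollified momentum equation is $\partial_t u_\e + \divergence(u_\e\otimes u_\e) + \nabla p_\e = \divergence R_\e$, and taking the curl gives $\partial_t\omega_\e = \curl(u_\e\times\omega_\e) + \curl\divergence R_\e$. Carrying these two errors through the algebraic identity behind \eqref{local helicity smooth} yields, in $\D'(\Omega\times(0,T))$,
\[
\partial_t(u_\e\cdot\omega_\e) + \divergence\Big((u_\e\cdot\omega_\e)u_\e + \big(p_\e - \tfrac{|u_\e|^2}{2}\big)\omega_\e\Big) = \divergence(R_\e)\cdot\omega_\e + u_\e\cdot\curl\divergence R_\e =: -D_\e[u].
\]

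The second step is the convergence of the left-hand side, where the hypothesis $u\in L^2_{\loc}((0,T);H^{1/2}_{\loc})\cap L^\infty_{\loc}$ enters through the fact that $H^{1/2}_{\loc}\cap L^\infty_{\loc}$ is a multiplicative algebra: hence $u\otimes u\in L^2_{\loc}((0,T);H^{1/2}_{\loc})$ and, by interior elliptic regularity applied to the pressure relation $\Delta p(\cdot,t) = -\divergence\divergence(u\otimes u)(\cdot,t)$ (obtained by testing the momentum equation with gradients), also $p\in L^2_{\loc}((0,T);H^{1/2}_{\loc})$. Every product on the left-hand side is then read against a test function as an $H^{-1/2}$--$H^{1/2}$ duality pairing, for instance $\langle (u_\e\cdot\omega_\e)u_\e,\phi\rangle = \langle \curl u_\e,\,u_\e\,(u_\e\cdot\phi)\rangle$ and $\langle p_\e\omega_\e,\phi\rangle = \langle\curl u_\e,\,p_\e\phi\rangle$; since multiplication by a fixed $\phi\in C^\infty_c$ is continuous on $H^{1/2}\cap L^\infty$ and $u_\e\to u$, $p_\e\to p$ in $L^2_t H^{1/2}_{x,\loc}$ (hence $\curl u_\e\to\curl u$ in $L^2_t H^{-1/2}_{x,\loc}$), these pairings converge, dominated convergence in time being justified by the $L^2_t$ bounds. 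Thus the left-hand side converges in $\D'$ to $\partial_t(u\cdot\omega) + \divergence((u\cdot\omega)u + (p-\tfrac{|u|^2}{2})\omega)$.

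The third step identifies the defect. Integrating by parts against $\phi\in C^\infty_c$ and using $\epsilon_{ikl}\partial_k u_{\e,i} = -\omega_{\e,l}$ one rewrites $\langle u_\e\cdot\curl\divergence R_\e,\phi\rangle = \langle\divergence R_\e\cdot\omega_\e,\phi\rangle + (\text{lower order})$, so that
\[
\langle -D_\e[u],\phi\rangle = -2\int (R_\e:\nabla\omega_\e)\,\phi\,dx\,dt + E_\e(\phi),
\]
where $E_\e(\phi)$ collects commutator remainders of the schematic form $\int R_\e\,\omega_\e\,\nabla\phi$ and $\int R_\e\,\nabla u_\e\,\nabla\phi$ together with terms in which $R_\e$ is paired only with derivatives of $\phi$. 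The classical decomposition $R_\e = (u_\e-u)\otimes(u_\e-u) - \int\rho_\e(\xi)\,\delta_\xi u\otimes\delta_\xi u\,d\xi$ and the Gagliardo characterization of $H^{1/2}$ give the \emph{strict} bound $\int_0^T\|R_\e(t)\|_{L^2_{\loc}}^2\,dt = o(\e)$, while a Fourier truncation gives $\int_0^T\|\nabla u_\e(t)\|_{L^2_{\loc}}^2\,dt = O(\e^{-1})$; by Cauchy--Schwarz every term of $E_\e(\phi)$ tends to $0$ (the terms pairing $R_\e$ only with $\nabla\phi,\nabla^2\phi$ vanish trivially since $\|R_\e\|_{L^1_{\loc}}\to 0$). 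Combining with Step 2, $2\nabla\omega_\e:R_\e$ converges in $\D'$; denoting the limit by $D[u]$ gives both \eqref{local helicity rough} and the announced formula.

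The delicate point is Step 3. Both $R_\e$ and $\nabla u_\e$ are individually \emph{critical} at this regularity (their local $L^2$ norms behave like $\e^{1/2}$ and $\e^{-1/2}$), so the vanishing of the commutator remainder cannot follow from a mere product estimate: it relies on the \emph{strict} $o(\e)$ (not just $O(\e)$) decay of $\int_0^T\|R_\e\|_{L^2_{\loc}}^2$, which is precisely where the $H^{1/2}$ assumption is used in an essential way, as opposed to a Besov $B^{1/3}_{3,\infty}$-type condition. A secondary source of care is the bookkeeping of exactly which error terms survive in $-D_\e[u]$ after the integrations by parts, and checking that the ones retained collapse to the single expression $2\,\nabla\omega_\e:R_\e$.
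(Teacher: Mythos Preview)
Your proposal is correct and follows the same overall Duchon--Robert strategy as the paper: mollify in space, derive the regularized helicity identity with the commutator $R_\e$, show that the left-hand side converges via the $H^{-1/2}$--$H^{1/2}$ duality (using that $H^{1/2}\cap L^\infty$ is an algebra and the interior Calder\'on--Zygmund regularity of the pressure), and identify the surviving defect as $2\nabla\omega_\e:R_\e$.

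The one substantive difference is how you dispose of the remainder $E_\e(\phi)$ made of terms like $\int R_\e\,\omega_\e\,\nabla\phi$ and $\int R_\e\,\nabla u_\e\,\nabla\phi$. The paper proves the qualitative strong convergence $R_\e(t)\to 0$ in $H^{1/2}_{\rm loc}$ (this is the content of \cref{convergence of the squares}) and then pairs it against $\partial_l u_\e$, which stays bounded in $H^{-1/2}$, via \cref{l: convergence in H^-1/2}. You instead use the quantitative rates $\int_I\|R_\e\|_{L^2}^2 = o(\e)$ and $\int_I\|\nabla u_\e\|_{L^2}^2 = O(\e^{-1})$ and close by Cauchy--Schwarz. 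Your $o(\e)$ claim is indeed a direct consequence of the Gagliardo form: since $\e^{-3}\rho(h/\e)\le C\e\,|h|^{-4}$ for $|h|\le\e$, one has $\|R_\e(t)\|_{L^2}^2\lesssim \|u(t)\|_{L^\infty}^2\,\e\int_{|h|\le\e}|h|^{-4}\|\delta_h u(t)\|_{L^2}^2\,dh$, and the last integral tends to $0$ by absolute continuity of $[u(t)]_{H^{1/2}}^2$. This route is closer in spirit to the classical Constantin--E--Titi estimates and sidesteps the need for \cref{convergence of the squares}; the paper's duality argument, on the other hand, is entirely scale-free and makes transparent that the mechanism is $R_\e\to 0$ in the very space where $\nabla u_\e$ is controlled, rather than a balance of rates.
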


If $\Omega = \T^3$ the statement and the proof of \cref{th:main-helicity} are modified accordingly. In this case, \eqref{local helicity rough} can be tested with any periodic smooth function on $\T^3$, since $\T^3$ has no boundary. We highlight that a similar result in the periodic setting has been obtained simultaneously in \cite{BT24}*{Theorem 2.7} with the assumption  $u \in L^3_t W^{\sfrac{1}{3}+, 3}_x$. We briefly comment on our assumption $u \in L^2_t H^{\sfrac{1}{2}}_x \cap L^\infty_{x,t}$.  If $u \in L^2_t H^{\sfrac{1}{2}}_x$, then $\omega \in L^2_t H^{-\sfrac{1}{2}}_x$ and the coupling $u \cdot \omega$ can be defined as a distribution (see \cref{l: time dependent duality} and \cref{coupling in H^1/2 - H^-1/2}). If we assume in addition that $u \in L^\infty_{x,t}$, then it is straightforward to check that quadratic functions of $u$ belong to $ L^2_t H^{\sfrac{1}{2}}_x$ (see \cref{H12 is algebra}). The pressure term can be treated by Calderón--Zygmund estimates. In other words, the assumption $u \in L^2_t H^{\sfrac{1}{2}}_x \cap  L^\infty_{x,t}$ is natural from the equations to interpret \eqref{local helicity rough} in the sense of distributions. Moreover, by interpolation it is readily checked that $H^{\sfrac{1}{2}}\cap L^\infty$ embeds continuously into $B^{\sfrac{1}{3}}_{3, \infty}$, which is the critical space for the celebrated Onsager conjecture related to the conservation of the kinetic energy for weak solutions to \eqref{Euler}. The rigidity part of the conjecture has been established by Constantin--E--Titi and Eyink \cites{CET94, Ey94} by commutator type estimates similar to \cite{DPL89}, see also \cites{CCFS08, DR00, DRINV23} for related results. The flexible part has been proved in \cite{Is18} by Isett, building on the ground breaking ideas of convex integration introduced in the context of incompressible fluids by De Lellis and Székelyhidi \cites{DLL09,DLL13}.
For other convex integration results in incompressible fluid dynamics see also \cites{GKN23, NV23,BV19,CDRS22} and the references therein.

To the best of our knowledge, the investigation of similar questions for the helicity is still at a preliminary stage. It is known that $B^{\sfrac{2}{3}}_{3, \infty}$ is the critical space for the conservation of the total helicity \cites{CCFS08, DR19}. We are not aware of construction of solutions to \eqref{Euler} for which the total/local helicity is not conserved. To this end, it seems important to establish a local helicity balance (up to a defect distribution) as in \eqref{local helicity rough} for rough solutions of \eqref{Euler}. By this method, we prove the validity of the exact helicity balance (i.e. $D[u] \equiv 0$ in \eqref{local helicity rough}) under a $\sfrac{2}{3}$ fractional spatial regularity on the velocity field (see \cref{ss: commutator estimates and besov} for a precise definition of Besov spaces). 

\begin{cor} \label{cor: conservation}
Under the assumptions of \cref{th:main-helicity}, suppose that for any open set $I \subset \joinrel \subset (0,T), O \subset \joinrel \subset \Omega$ it holds that $u \in L^3(I; B^{\sfrac{2}{3}}_{3,c_0} (O))$ according to \cref{ss: commutator estimates and besov}. Then $D[u] \equiv 0$. 
\end{cor}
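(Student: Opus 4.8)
The plan is to use the representation $D[u]=2\lim_{\e\to0}\nabla\omega_\e:R_\e$ in $\D'(\Omega\times(0,T))$ furnished by \cref{th:main-helicity} and to prove that this weak limit vanishes. It suffices to show $\langle D[u],\varphi\rangle=0$ for every $\varphi\in C^\infty_c(\Omega\times(0,T))$. Fix such a $\varphi$, choose open sets $O_1\Subset O_2\Subset\Omega$ and $I\Subset(0,T)$ with $\Spt\varphi\subset O_1\times I$, so that by assumption $u\in L^3(I;B^{2/3}_{3,c_0}(O_2))$, and fix a cutoff $\chi\in C^\infty_c(O_2)$ with $\chi\equiv1$ on $\overline{O_1}$. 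Set $\tilde u:=\chi u$; since $2/3\in(0,1)$ and $u\in L^\infty_{x,t}$ (by \cref{th:main-helicity}), $\tilde u$ (extended by $0$) belongs to $L^3(I;B^{2/3}_{3,c_0}(\R^3))\cap L^\infty$, has compact spatial support, hence lies in $L^3(I;L^3(\R^3))$, and agrees with $u$ on $O_1$. Because $R_\e$ and $\nabla\omega_\e$ depend on the velocity only through its values in a spatial ball of radius $\e$, for $\e$ small the integrands $\nabla\omega_\e:R_\e$ formed from $u$ and from $\tilde u$ coincide on $\Spt\varphi$; therefore $\langle D[u],\varphi\rangle=2\lim_{\e\to0}\int(\nabla\tilde\omega_\e:\tilde R_\e)\,\varphi$, where $\tilde\omega_\e,\tilde R_\e$ are formed from $\tilde u$, and it is enough to prove $\nabla\tilde\omega_\e:\tilde R_\e\to0$ in $L^1(O_1\times I)$. (The cutoff both localizes and makes $\tilde u$ globally integrable, which is needed for the Littlewood--Paley projections below; that $\tilde u$ fails to be divergence-free is harmless, since the commutator estimates do not use this.)

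Next I would record two commutator bounds at the critical exponent $s=2/3$, where $[f]_{B^{2/3}_{3,\infty}}\simeq\sup_{h\neq0}|h|^{-2/3}\|f(\cdot-h)-f\|_{L^3}$. With $B_\e(a,b):=a_\e\otimes b_\e-(a\otimes b)_\e$, the Constantin--E--Titi identity (see \cite{CET94}) reads $B_\e(a,b)=(a-a_\e)\otimes(b-b_\e)-\int\rho_\e(y)(a(\cdot-y)-a)\otimes(b(\cdot-y)-b)\,dy$, whence $\|B_\e(a,b)(\cdot,t)\|_{L^{3/2}}\lesssim\e^{4/3}\|a(\cdot,t)\|_{B^{2/3}_{3,\infty}}\|b(\cdot,t)\|_{B^{2/3}_{3,\infty}}$; in particular $\|\tilde R_\e(\cdot,t)\|_{L^{3/2}}\lesssim\e^{4/3}\|\tilde u(\cdot,t)\|_{B^{2/3}_{3,\infty}}^2$, since $\tilde R_\e=B_\e(\tilde u,\tilde u)$. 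On the other hand $\nabla\tilde\omega_\e=\tilde u * K_\e$ with $K_\e$ a matrix of second-order derivatives of $\rho_\e$, so $\|K_\e\|_{L^1}\sim\e^{-2}$ and $\int K_\e=0$; using this cancellation once gives $\|\nabla\tilde\omega_\e(\cdot,t)\|_{L^3}\lesssim\e^{-4/3}\|\tilde u(\cdot,t)\|_{B^{2/3}_{3,\infty}}$. By Hölder in $x$ (exponents $3$ and $3/2$) and then integration in $t$, $\nabla\tilde\omega_\e:\tilde R_\e$ is bounded in $L^1(O_1\times I)$ uniformly in $\e$, by $\lesssim\|\tilde u\|^3_{L^3(I;B^{2/3}_{3,\infty})}$. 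But $B^{2/3}_{3,\infty}$ is exactly the threshold, and this a priori bound does not by itself force the limit to vanish; this is where the refined space $B^{2/3}_{3,c_0}$ is used.

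To extract the little-$o$, split $\tilde u=v_N+w_N$ with $v_N=P_{\le N}\tilde u$ and $w_N=P_{>N}\tilde u$. By the definition of $B^{2/3}_{3,c_0}$, $\|w_N(\cdot,t)\|_{B^{2/3}_{3,\infty}}\to0$ for a.e. $t$ as $N\to\infty$; being dominated by $\|\tilde u(\cdot,t)\|_{B^{2/3}_{3,\infty}}\in L^3_t$, dominated convergence gives $\|w_N\|_{L^3(I;B^{2/3}_{3,\infty})}\to0$; and Bernstein's inequalities bound $\|v_N(\cdot,t)\|_{C^k}$ and $\|v_N(\cdot,t)\|_{W^{k,3}}$ by $C(N,k)\|\tilde u(\cdot,t)\|_{L^3\cap L^\infty}$ for fixed $N$. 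Expanding $\tilde R_\e=\sum_{b,c\in\{v_N,w_N\}}B_\e(b,c)$ and $\nabla\tilde\omega_\e=\nabla\curl(v_N)_\e+\nabla\curl(w_N)_\e$ produces eight trilinear pieces $\nabla\curl(a)_\e:B_\e(b,c)$. Five of them tend to $0$ in $L^1(O_1\times I)$ as $\e\to0$ for each fixed $N$: when $a=v_N$ the factor $\nabla\curl(v_N)_\e$ is $L^3$-bounded uniformly in $\e$ while $\|B_\e(b,c)\|_{L^{3/2}}$ is $O(\e^{4/3})$ or $O(\e^2)$; and when $a=w_N$ but $b=c=v_N$, the Lipschitz gain $\|B_\e(v_N,v_N)\|_{L^{3/2}}=O(\e^2)$ over-compensates $\|\nabla\curl(w_N)_\e\|_{L^3}=O(\e^{-4/3})$; in all these cases one concludes by dominated convergence in $t$ with an $L^1_t$ majorant built from $\|\tilde u(\cdot,t)\|_{L^3\cap L^\infty}$ and $\|\tilde u(\cdot,t)\|_{B^{2/3}_{3,\infty}}$. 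The three remaining pieces $\nabla\curl(w_N)_\e:B_\e(b,c)$ with $w_N\in\{b,c\}$ are $O(1)$ in $\e$ but carry two factors of $w_N$: using $\|\nabla\curl(w_N)_\e(\cdot,t)\|_{L^3}\lesssim\e^{-4/3}\|w_N(\cdot,t)\|_{B^{2/3}_{3,\infty}}$ and $\|B_\e(b,c)(\cdot,t)\|_{L^{3/2}}\lesssim\e^{4/3}\|w_N(\cdot,t)\|_{B^{2/3}_{3,\infty}}(\|\tilde u(\cdot,t)\|_{B^{2/3}_{3,\infty}}+\|w_N(\cdot,t)\|_{B^{2/3}_{3,\infty}})$, Hölder in $t$ bounds their total $L^1(O_1\times I)$-norm, uniformly in $\e$, by $\lesssim\|w_N\|^2_{L^3(I;B^{2/3}_{3,\infty})}(\|\tilde u\|_{L^3(I;B^{2/3}_{3,\infty})}+\|w_N\|_{L^3(I;B^{2/3}_{3,\infty})})$. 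Hence $\limsup_{\e\to0}\|\nabla\tilde\omega_\e:\tilde R_\e\|_{L^1(O_1\times I)}$ is, for every $N$, bounded by this last quantity, which tends to $0$ as $N\to\infty$; so the limit is $0$, $\langle D[u],\varphi\rangle=0$, and $D[u]\equiv0$.

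The hard part is the third paragraph: the uniform $L^1$ bound sits exactly at the critical exponent and does not close the argument, so one must genuinely use the little-$o$ built into $B^{2/3}_{3,c_0}$ through the frequency splitting, keeping careful track, for each of the eight trilinear pieces, of whether to send $\e\to0$ at fixed $N$ or to exploit the smallness of $\|w_N\|_{B^{2/3}_{3,\infty}}$ at fixed $\e$; by comparison, the localization via $\chi$ (needed because the Littlewood--Paley projections are non-local) and the dominated-convergence bookkeeping in the time variable (handled by the $L^3_t$ envelope $\|\tilde u(\cdot,t)\|_{B^{2/3}_{3,\infty}}$) are routine.
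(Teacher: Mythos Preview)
Your argument is correct, but it is substantially more elaborate than the paper's. The paper's proof is three lines: working directly with the increment-based definition of $B^{2/3}_{3,c_0}$ from \cref{ss: commutator estimates and besov}, the commutator estimates of \cref{l: commutator estimates} (with $\theta=2/3$, $p=3$, $k=2$) already carry the modulus of continuity $\ell_{u(t),U}(\e)$, yielding
\[
\|\nabla\omega_\e(t):R_\e(t)\|_{L^1(U)}\;\lesssim\;\ell_{u(t),U}(\e)^3.
\]
This tends to $0$ for a.e.\ $t$ by the very definition of the $c_0$-space, is dominated by $[u(t)]^3_{B^{2/3}_{3,\infty}(U)}\in L^1(I)$, and so dominated convergence gives $\nabla\omega_\e:R_\e\to0$ in $L^1_{\rm loc}$, hence $D[u]\equiv0$. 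No cutoff, no Littlewood--Paley projections, and no eight-piece trilinear decomposition are needed: the little-$o$ you extract through the high/low frequency splitting is already packaged into the modulus $\ell$.

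Your route is in effect the Paley--Littlewood flux analysis of \cite{CCFS08}, using the (equivalent) characterization of $B^{2/3}_{3,c_0}(\R^3)$ as the closure of smooth functions in $B^{2/3}_{3,\infty}$. It is correct and conceptually natural, but here it buys nothing over the direct increment argument: the latter is shorter, stays entirely local (so the cutoff $\chi$ and the global $L^3$-integrability it manufactures are unnecessary), and replaces your nested limit $\e\to0$ then $N\to\infty$ by a single $\e\to0$.
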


This result has essentially been proved in \cite{CCFS08}*{Theorem 4.2} on the periodic box $\T^3$ by the analysis of the energy flux with a Paley--Littlewood decomposition. Our approach is a mollification type argument inspired by \cite{DR19}, which is based on \cites{DPL89,DR00,CET94}. However, in \cite{DR19} the author proves conservation of helicity assuming  that $\curl (u)$ is integrable, together with further suitable assumptions. We remark that  both the space $B^{\sfrac{2}{3}}_{3, c(\N)}(\R^3)$ considered in \cite{CCFS08} and the space of $B^{\sfrac{2}{3}}_{3,c_0}(\R^3)$ defined in \cref{ss: commutator estimates and besov} can be characterized as the closure of smooth functions with respect to the $B^{\sfrac{2}{3}}_{3,\infty}(\R^3)$ norm, thus they coincide. 
Since we are interested in the \emph{local} helicity balance, it seems more natural to describe fractional differentiability in terms of the Gagliardo seminorm, but the approaches are completely equivalent.  

On a bounded domain $\Omega \subset \R^3$ the total helicity might not be conserved if the vorticity is not tangent to the boundary. Indeed, due to lack of a natural boundary condition on the vorticity, the boundary can be used in simulations to create nontrivial vorticity which forces the fluid to develop a turbulent behaviour inside the domain \cite{Fr95}*{Section 8.9}. For technical reasons, on a bounded open set we need additional assumptions on the vorticity to define the total helicity. 

\begin{Def}[Total helicity] \label{total helicity}
Let $\Omega$ be a bounded open set and let $u \in L^2(\Omega)$ with $\omega = \curl(u) \in L^2(\Omega)$. We define the total helicity by 
\begin{equation}
    H: = \int_{\Omega} \omega \cdot u \, dx. 
\end{equation}
\end{Def} 
It is possible to generalize this definition only requiring that $ u \in \mathbb{X}$, where $\mathbb{X}$ is a Banach space, and $\curl (u) \in (\mathbb{X})'$. For instance, this is the case on $\T^3$ whenever $u \in L^2 ((0,T); H^{\sfrac{1}{2}} (\T^3))$.

Under suitable regularity assumptions, we compute the variation of the total helicity in terms of the normal component of the vorticity and the full trace of the velocity and the pressure at the boundary. Our approach is based on the analysis of the \emph{normal Lebesgue boundary} trace given in \cites{DRINV23, CDRIN24}. 

\begin{thm}  \label{thm:conservation-bdd-domain}
Let $\Omega \subset \R^3$ be a simply connected bounded open set with smooth boundary and let $(u,p) \in L^\infty( \Omega \times (0,T))$ be a weak solution to \eqref{Euler} according to \cref{d: weak solution} with the impermeability boundary condition \eqref{eq: impermeability condition}. Assume that $\omega \in L^\infty(\Omega \times (0,T))$ has normal Lebesgue trace $\omega_n(t)$ for a.e. $t \in (0,T)$ according to \cref{def:normal}. Then, $u(\cdot, t), p(\cdot, t) \in C^0(\overline{\Omega})$ for a.e. $t \in (0,T)$ and the total helicity (see \cref{total helicity}) satisfies
    \begin{align} \label{eq:global-helicity-conservation}
        \int_0^T  H(t) \alpha'(t) \, dt  & = \int_0^T \alpha(t) \left[  \int_{\partial \Omega} \left( \frac{\abs{u}^2}{2} - p \right) \omega_n \, d \mathcal{H}^2(x) \right] \, dt \qquad \forall \alpha \in C^\infty_c((0,T)).
    \end{align}   
\end{thm}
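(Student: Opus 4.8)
The plan is to upgrade the regularity of $u$ and $p$ up to $\partial\Omega$, to deduce from \cref{cor: conservation} that the defect distribution of \cref{th:main-helicity} vanishes, and then to recover the boundary flux by testing the resulting exact local helicity balance against a cutoff that collapses onto $\partial\Omega$. \emph{Step 1: interior regularity and vanishing defect.} Since $(u,p)$ is a weak solution, $\divergence u(\cdot,t)=0$ in $\D'(\Omega)$ for a.e.\ $t$, while $\curl u=\omega\in L^\infty(\Omega\times(0,T))$, hence $-\Delta u(\cdot,t)=\curl\omega(\cdot,t)\in W^{-1,q}_{\loc}(\Omega)$ for every $q<\infty$. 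Interior elliptic regularity gives $u(\cdot,t)\in W^{1,q}_{\loc}(\Omega)$ for all $q<\infty$ and a.e.\ $t$, with bounds uniform in $t$, so $u(\cdot,t)\in C^{0,\theta}_{\loc}(\Omega)$ for every $\theta\in(0,1)$. In particular $u\in L^2_{\loc}((0,T);H^{\sfrac{1}{2}}_{\loc}(\Omega))$, so the hypotheses of \cref{th:main-helicity} hold, and since $C^{0,\theta}(\overline O)\hookrightarrow B^{\sfrac{2}{3}}_{3,c_0}(O)$ for $\theta>\sfrac{2}{3}$ and $O\Subset\Omega$, we get $u\in L^3(I;B^{\sfrac{2}{3}}_{3,c_0}(O))$ for all $I\Subset(0,T)$, $O\Subset\Omega$. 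By \cref{cor: conservation}, $D[u]\equiv0$, so
\[
\partial_t(u\cdot\omega)+\divergence\left((u\cdot\omega)u+\left(p-\tfrac{\abs{u}^2}{2}\right)\omega\right)=0 \qquad\text{in }\D'(\Omega\times(0,T)).
\]

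\emph{Step 2: regularity up to $\partial\Omega$.} Here I use that $\Omega$ is simply connected with smooth boundary and that the impermeability condition \eqref{eq: impermeability condition} means $u(\cdot,t)\cdot n=0$ on $\partial\Omega$ for a.e.\ $t$. Classical div--curl elliptic estimates then give $\norm{u(\cdot,t)}_{W^{1,q}(\Omega)}\lesssim\norm{\omega(\cdot,t)}_{L^q(\Omega)}$ for all $q<\infty$, so $u(\cdot,t)\in C^{0,\theta}(\overline\Omega)$ for a.e.\ $t$ and all $\theta\in(0,1)$. Consequently $u\otimes u\in W^{1,q}(\Omega)\cap L^\infty(\Omega)$, and recovering the pressure from the Poisson equation $\Delta p=-\divergence\bigl((u\cdot\nabla)u\bigr)$ with the Neumann condition $\partial_n p=-\bigl[(u\cdot\nabla)u\bigr]\cdot n$ on $\partial\Omega$ inherited from the momentum equation — which, using $u\cdot n=0$, equals a $C^{0,\theta}(\partial\Omega)$ expression given by the second fundamental form of $\partial\Omega$ applied to $u$ — elliptic regularity yields $p(\cdot,t)\in W^{1,q}(\Omega)\subset C^{0,\theta}(\overline\Omega)$ for a.e.\ $t$. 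In particular $u(\cdot,t),p(\cdot,t)\in C^0(\overline\Omega)$ and $u\cdot n=0$ holds pointwise on $\partial\Omega$.

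\emph{Step 3: localization and the boundary flux.} Fix $\alpha\in C^\infty_c((0,T))$ and, for small $\delta>0$, let $\beta_\delta=\eta_\delta(d)$, where $d$ is the distance to $\partial\Omega$ and $\eta_\delta\in C^\infty([0,\infty);[0,1])$ is nondecreasing with $\eta_\delta\equiv0$ on $[0,\delta]$ and $\eta_\delta\equiv1$ on $[2\delta,\infty)$. Testing the balance of Step 1 against $\alpha(t)\beta_\delta(x)\in C^\infty_c(\Omega\times(0,T))$ gives
\[
\int_0^T \alpha'(t)\int_\Omega \beta_\delta\,(u\cdot\omega)\,dx\,dt+\int_0^T \alpha(t)\int_\Omega \nabla\beta_\delta\cdot\left((u\cdot\omega)u+\left(p-\tfrac{\abs{u}^2}{2}\right)\omega\right)dx\,dt=0.
\]
As $\delta\to0$ the first integral tends to $\int_0^T H(t)\alpha'(t)\,dt$ (see \cref{total helicity}), since $u\cdot\omega\in L^\infty$ and $\beta_\delta\uparrow1$. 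For the second, $\nabla\beta_\delta=\eta_\delta'(d)\nabla d$ is supported in $\{\delta<d<2\delta\}$ with $\nabla d$ the inward unit normal, so by the coarea formula and $\int_0^\infty\eta_\delta'=1$ it suffices to compute the limit as $s\to0$ of the integral of $\nabla d\cdot\bigl((u\cdot\omega)u+(p-\abs{u}^2/2)\omega\bigr)$ over $\{d=s\}$. The contribution of $(u\cdot\omega)(u\cdot\nabla d)$ vanishes, because $u$ is continuous up to $\partial\Omega$ with $u\cdot n=0$ there (hence $u\cdot\nabla d\to0$ uniformly) while $u\cdot\omega\in L^\infty$; the contribution of $(p-\abs{u}^2/2)(\omega\cdot\nabla d)$ produces the boundary integral $\int_{\partial\Omega}\bigl(\tfrac{\abs{u}^2}{2}-p\bigr)\omega_n\,d\mathcal{H}^2$ of \eqref{eq:global-helicity-conservation}, by continuity of $p-\abs{u}^2/2$ up to $\partial\Omega$ and the defining property of the normal Lebesgue trace $\omega_n$ of $\omega$ (\cref{def:normal}). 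Combining the two limits gives \eqref{eq:global-helicity-conservation}.

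\emph{Main obstacle.} The difficulties concentrate at the boundary and in the limit of Step 3. Although $u$ and $p$ are continuous up to $\partial\Omega$, the vorticity is only $L^\infty$ there, so the flux $(u\cdot\omega)u+(p-\abs{u}^2/2)\omega$ is \emph{not} continuous up to $\partial\Omega$; only its normal component on the shrinking shells $\{d=s\}$ is tractable, and identifying the limit of $(p-\abs{u}^2/2)(\omega\cdot\nabla d)$ is exactly what the normal Lebesgue boundary trace of \cites{DRINV23,CDRIN24} is designed for — a naive restriction of $\omega$ to $\partial\Omega$ is not available. Establishing the pressure boundary condition and the continuity $p(\cdot,t)\in C^0(\overline\Omega)$ — through the pressure Poisson equation under impermeability — is the other technical point; the rest is routine elliptic estimates and localization once $D[u]\equiv0$ is known.
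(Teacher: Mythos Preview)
Your argument is correct and matches the paper's proof closely: regularity of $u$ and $p$ up to $\partial\Omega$ (packaged in the paper as \cref{lemma:omega-smooth}, via von Wahl's div--curl estimate \cref{prop:wahl} together with the density result \cref{lemma:Lpdensity}, and the pressure results of De Rosa--Latocca--Stefani rather than your explicit Neumann problem), then $D[u]\equiv 0$ by \cref{cor: conservation}, then a boundary-layer cutoff to extract the flux. The only cosmetic difference is that the paper uses the Lipschitz cutoff $\chi_r=\min(r^{-1}d,1)$ and invokes \cref{prop:boundary} directly for the two boundary limits, whereas you use a smooth cutoff and a coarea/level-set argument whose rigorous justification is exactly the content of \cref{prop:boundary} (i.e.\ \cite{CDRIN24}*{Corollary~3.3}), as you yourself flag in the final paragraph.
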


We point out that the assumptions on $\partial \Omega,u,p$ and on the vorticity away from the boundary in \cref{thm:conservation-bdd-domain} could be weakened, similarly to those of Theorem \ref{th:main-helicity}. To keep the proofs and the statements simple, we decide to avoid this discussion. We recall that for a smooth solution $u$ of \eqref{Euler}, then $\omega = \curl(u)$ satisfies the Cauchy formula 
\begin{equation}
    \omega(x,t) = [ \nabla X_t \, \omega_0 ](X^{-1}_t(x)), \label{eq: cauchy formula}
\end{equation}
where $X_t$ is the flow map associated to the velocity field $u$ at time $t$. For the reader convenience, we enclose a proof of this standard formula in Lemma \ref{lem:computation-curl}. In the smooth setting, on a bounded domain $\Omega$, the boundary is invariant under the flow map thanks to the impermeability condition. Therefore, if $\omega_0 \equiv 0$ on $\partial \Omega$, then $\omega(t) \equiv 0$ on $\partial \Omega$ for any $t>0$. In this case, a simple integration by parts shows that the total helicity is conserved. 

\section{Tools} \label{s: tools}

In this section, we collect some basic tools that will be used throughout this note. 

\subsection{The fractional Sobolev space $H^{\sfrac{1}{2}}$} \label{ss: fractional sobolev space}

In order to keep this note self-contained, we recall some basic facts on the fractional Sobolev space $H^{\sfrac{1}{2}}$ and its dual. We follow \cite{GL23}*{Chapter 6}. Given an open set $\Omega \subset \R^d$, we denote by
\begin{equation}
    [u]_{H^{\sfrac{1}{2}  }(\Omega)}^2 \coloneqq \iint_{\Omega \times \Omega} \frac{\abs{u(x) -u(y)}^2}{\abs{x-y}^{d+1}}\, dx \, dy, \qquad \norm{u}_{H^{\sfrac{1}{2}  }(\Omega)} \coloneqq \norm{u}_{L^2(\Omega)} + [u]_{H^{\sfrac{1}{2}  }} 
\end{equation}
the Gagliardo seminorm and the fractional Sobolev norm. Similarly, we say that $f \in H^{\sfrac{1}{2}}_{\rm loc}(\Omega)$ if $f \in H^{\sfrac{1}{2}}(U)$ for any bounded open set $U$ with $\overline{U} \subset \Omega$. If $\Omega = \R^d$, then $H^{\sfrac{1}{2}}$ can be equivalently defined via the Fourier transform (see \cites{DNPV12, GL23}). Indeed, by \cite{DNPV12}*{Proposition 3.7}, there exists a purely dimensional constant $\gamma_d$ such that 
\begin{equation}
    [f]_{H^{\sfrac{1}{2}}(\R^d)}^2 = \gamma_d \int_{\R^d} \abs{\xi} \abs{\hat{f}(\xi)}^2 \, d \xi. 
\end{equation}
It is classical to define $H^{-\sfrac{1}{2}  } (\Omega) = (H^{\sfrac{1}{2}  }_0 (\Omega))'$, where $H^{\sfrac{1}{2}  }_0 (\Omega)$ is the completion of $C^\infty_c (\Omega)$ with respect to the $H^{\sfrac{1}{2}  }$ norm. However, if $\Omega$ is a bounded open set with Lipschitz boundary, by \cite{GL23}*{Theorem 6.78} we have $H^{\sfrac{1}{2}  } (\Omega) = H^{\sfrac{1}{2}  }_0 (\Omega)$ and therefore $H^{-\sfrac{1}{2}  } (\Omega) = (H^{\sfrac{1}{2}  } (\Omega) )'$. Given $U, V$ any open sets such that $U \subset \joinrel \subset V$, we denote by $H^{\sfrac{1}{2}}_U(V)$ the collection of functions in $H^{\sfrac{1}{2}}(V)$ with support in $U$. Since $H^{\sfrac{1}{2}}_U(V)$ is a closed linear subspace of $H^{\sfrac{1}{2}}(V)$, then $H^{\sfrac{1}{2}}_U(V)$ is a Banach space with the norm $\norm{\cdot}_{H^{\sfrac{1}{2}}(V)}$ and we denote by $H^{-\sfrac{1}{2}}_U(V)$ its dual. Unless otherwise specified, we denote by $\langle \cdot, \cdot \rangle$ the duality pairing between $H^{-\sfrac{1}{2}}_U(V)$ and $H^{\sfrac{1}{2}}_U(V)$. 

\begin{lem} \label{l: approx by convolution}
Let $\Omega \subset \R^d$ be any open set and $f \in H^{\sfrac{1}{2}  }_{\rm loc}(\Omega)$. Define $f_\e = \tilde{f} * \rho_\e$, where $\rho$ is a standard mollifier and $\tilde{f}$ is the extension of $f$ to $0$ outside $\Omega$. Then $f_\e \to f$ in $H^{\sfrac{1}{2}}_{\rm loc}(\Omega)$. Moreover, for any open sets $U, V$ with $U \subset \joinrel \subset V \subset \joinrel \subset \Omega$ and $\e < \e_0 = \text{dist} (U, V^c)$ it holds  
\begin{equation}
    \norm{f_\e}_{H^{\sfrac{1}{2}  }(U)} \leq \norm{f}_{H^{\sfrac{1}{2}  }(V)}. \label{eq: bound convolution H^1/2}
\end{equation}
\end{lem}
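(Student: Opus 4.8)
The plan is to prove the two statements of the lemma separately, starting with the quantitative bound \eqref{eq: bound convolution H^1/2} and then deducing convergence. For the bound, fix open sets $U \subset\joinrel\subset V \subset\joinrel\subset \Omega$ and take $\e < \e_0 = \operatorname{dist}(U, V^c)$. The $L^2$ part is immediate from Young's inequality: $\norm{f_\e}_{L^2(U)} \le \norm{\tilde f * \rho_\e}_{L^2(U)} \le \norm{\tilde f}_{L^2(V)} = \norm{f}_{L^2(V)}$, where we used that for $x \in U$ the value $(\tilde f * \rho_\e)(x)$ only sees $\tilde f$ on $B_\e(x) \subset V$, so $\tilde f$ agrees with $f$ there. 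For the Gagliardo seminorm, the key point is the standard convexity/Jensen argument: write
\[
f_\e(x) - f_\e(y) = \int_{\R^d} \rho_\e(z)\, \bigl( \tilde f(x-z) - \tilde f(y-z) \bigr) \, dz,
\]
so that by Jensen's inequality with respect to the probability measure $\rho_\e(z)\,dz$,
\[
\abs{f_\e(x) - f_\e(y)}^2 \le \int_{\R^d} \rho_\e(z)\, \abs{\tilde f(x-z) - \tilde f(y-z)}^2 \, dz.
\]
Dividing by $\abs{x-y}^{d+1}$, integrating over $(x,y) \in U \times U$, applying Fubini, and then translating variables $x \mapsto x+z$, $y \mapsto y+z$, one gets
\[
[f_\e]_{H^{1/2}(U)}^2 \le \int_{\R^d} \rho_\e(z) \iint_{(U-z)\times(U-z)} \frac{\abs{\tilde f(x) - \tilde f(y)}^2}{\abs{x-y}^{d+1}} \, dx \, dy \, dz.
\]
Since $\abs{z} < \e < \e_0$, for every $z$ in the support of $\rho_\e$ we have $U - z \subset V$, and on $V$ the extension $\tilde f$ coincides with $f$; hence the inner double integral is bounded by $[f]_{H^{1/2}(V)}^2$ uniformly in $z$, and since $\rho_\e$ integrates to $1$ we conclude $[f_\e]_{H^{1/2}(U)} \le [f]_{H^{1/2}(V)}$. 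Combining with the $L^2$ estimate gives \eqref{eq: bound convolution H^1/2}.

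For the convergence $f_\e \to f$ in $H^{1/2}_{\rm loc}(\Omega)$, fix any bounded open $U$ with $\overline U \subset \Omega$ and pick intermediate open sets $U \subset\joinrel\subset W \subset\joinrel\subset V \subset\joinrel\subset \Omega$. Convergence in $L^2(U)$ is classical. For the seminorm, the natural route is an approximation argument: given $\delta > 0$, choose $g \in C^\infty(\overline V)$ (or a Lipschitz function) with $\norm{f - g}_{H^{1/2}(V)} < \delta$, which is possible because $f \in H^{1/2}(V)$ and smooth functions are dense (using that $V$ can be taken with Lipschitz boundary, or simply approximating on the slightly larger set). Then estimate, on $U$,
\[
[f_\e - f]_{H^{1/2}(U)} \le [f_\e - g_\e]_{H^{1/2}(U)} + [g_\e - g]_{H^{1/2}(U)} + [g - f]_{H^{1/2}(U)}.
\]
The first term is controlled by $[(f-g)_\e]_{H^{1/2}(U)} \le [f-g]_{H^{1/2}(W)} \le \delta$ by the bound just proved (applied with $W$ in place of $V$, valid for $\e$ small); the third term is $\le \delta$ by the choice of $g$; and the middle term tends to $0$ as $\e \to 0$ because $g$ is smooth on a neighbourhood of $\overline U$, so $\nabla g_\e \to \nabla g$ uniformly there and $g_\e \to g$ in $C^1$, which gives $[g_\e - g]_{H^{1/2}(U)} \to 0$ by dominated convergence (the Gagliardo integrand is dominated using the Lipschitz bound $\abs{g_\e(x)-g_\e(y)-g(x)+g(y)} \lesssim \min(\e, \abs{x-y})$ together with integrability of $\abs{x-y}^{-d+1}$ on the bounded set $U$). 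Taking $\limsup_{\e\to 0}$ and then $\delta \to 0$ yields $[f_\e - f]_{H^{1/2}(U)} \to 0$, hence $f_\e \to f$ in $H^{1/2}(U)$, and since $U$ was arbitrary, in $H^{1/2}_{\rm loc}(\Omega)$.

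The main obstacle is purely bookkeeping: one must be careful that the mollification only uses values of $f$ inside $\Omega$ (not the artificial zero extension across $\partial\Omega$), which is exactly why the hypothesis $\e < \operatorname{dist}(U, V^c)$ and the nesting $U \subset\joinrel\subset V \subset\joinrel\subset \Omega$ are imposed — all the translates $U - z$ appearing in the Jensen estimate must stay inside a set where $\tilde f = f$. Once the chain of inclusions is set up correctly, the only analytic input is Jensen's inequality plus translation invariance of the Gagliardo kernel, and the density of smooth functions in $H^{1/2}$ on a nice bounded set; there is no genuine difficulty beyond this.
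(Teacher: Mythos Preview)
Your proof of the uniform bound is essentially identical to the paper's: both use Jensen's inequality against the probability measure $\rho_\e(z)\,dz$ and the translation $U-z\subset V$ to push the Gagliardo integral from $U$ to $V$.

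For the convergence $f_\e\to f$ in $H^{1/2}_{\rm loc}$, you take a genuinely different route. You invoke density of smooth functions in $H^{1/2}(V)$ and run a three-term triangle inequality, reducing to the easy case of a smooth $g$. The paper instead gives a direct, self-contained argument via a near/far diagonal split: with $D_\alpha=\{|x-y|\le\alpha\}$ it bounds the contribution on $D_\alpha\cap(U\times U)$ by $2\iint_{D_\alpha\cap(V\times V)}\frac{|f(x)-f(y)|^2}{|x-y|^{d+1}}\,dx\,dy$ (same Jensen computation) and the contribution on $D_\alpha^c$ by $C\alpha^{-d-1}\|f-f_\e\|_{L^2(U)}^2$; sending first $\e\to0$ and then $\alpha\to0$ finishes. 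Your approach is correct and arguably more modular, but it leans on density of smooth functions in $H^{1/2}$ as a black box --- a fact most often proved \emph{via} mollification, so there is a mild circularity risk unless you appeal to full-space density (Fourier side) together with an extension, or simply choose $V$ with Lipschitz boundary as you suggest. The paper's argument avoids this by being entirely internal, at the cost of one extra parameter.
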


\begin{proof}
Fix $U, V$ open sets with $U \subset \joinrel \subset V \subset \joinrel \subset \Omega$ and let $\e < \e_0 = \text{dist} (U, V^c)$. For any $\alpha >0$, we set $D_\alpha : = \{ (x,y) \in \R^d \times \R^d \colon \abs{x-y} \leq \alpha \}$. Then, we estimate
\begin{align}
    \iint_{D_\alpha \cap (U \times U )} \frac{ \abs{f_\e(x) - f_\e(y)}^2}{\abs{x-y}^{d+1}} \, dy \, dx & \leq \iint_{D_\alpha \cap (U \times U)} \frac{ \abs{\int_{B_\e} \abs{f(x-z)-f(y-z)} \rho_\e(z) \, dz}^2}{\abs{x-y}^{d+1}}\, dy \, dx  
    \\ & \leq \iint_{D_\alpha \cap (U \times U)} \int_{B_\e} \frac{ \abs{f(x-z)-f(y-z)}^2 }{\abs{x-y}^{d+1}} \rho_\e(z) \, dz \, dy  \, dx 
    \\ & \leq \iint_{D_\alpha \cap (V \times V)} \frac{ \abs{f(x)-f(y)}^2 }{\abs{x-y}^{d+1}} \, dy \, dx. 
\end{align}
Therefore, we write 
\begin{align}
    [f-f_\e]_{H^{\sfrac{1}{2}  }(U)}^2 & = \left[ \iint_{D_\alpha \cap (U \times U)} + \iint_{D_\alpha^c \cap (U \times U)}  \right] \frac{\abs{f_\e(x) -f_\e(y) -f(x) +f(y)}^2}{\abs{x-y}^{d+1}}\, dx \, dy  
    \\ & \leq 2 \iint_{D_\alpha \cap (V \times V)} \frac{\abs{f(x)- f(y)}^2}{\abs{x-y}^{d+1}} \, dx \, dy + 2 \mathcal{L}^d(U) \alpha^{-1-d} \norm{f-f_\e}_{L^2(U)}^2. 
\end{align}
Letting $\e \to 0$ and recalling that $f_\e \to f$ in $L^2(U)$, we have that 
\begin{equation}
    \limsup_{\e \to 0} [f-f_\e]_{H^{\sfrac{1}{2}  }(U)}^2 \leq 2 \iint_{D_\alpha \cap (V \times V)} \frac{\abs{f(x)- f(y)}^2}{\abs{x-y}^{d+1}} \, dx \, dy. 
\end{equation}
Letting $\alpha \to 0$, the right hand side goes to $0$ by dominated convergence, since $f \in H^{\sfrac{1}{2}}(V)$. The proof of \eqref{eq: bound convolution H^1/2} is completely analogous. 
\end{proof}

\begin{lem} \label{l: extension lemma}
Let $U, V$ be bounded open sets such that $ U \subset \joinrel \subset V$. Let $f \in H^{\sfrac{1}{2}}(V)$ and let $\chi \in C^\infty_c(V; [0,1])$ be such that $\chi \equiv 1$ on $U$. Letting $g = \chi f$, then 
\begin{equation}
    \norm{g}_{H^{\sfrac{1}{2} }(\R^d)} \lesssim \norm{f}_{H^{\sfrac{1}{2}  }(V)}, \label{eq: extension norm}
\end{equation} 
where the implicit constant depends only on $d,V,U, \chi$. 
\end{lem}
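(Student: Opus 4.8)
The plan is to reduce the estimate $\norm{\chi f}_{H^{1/2}(\R^d)} \lesssim \norm{f}_{H^{1/2}(V)}$ to two pieces: the $L^2$ part, which is trivial since $\norm{\chi f}_{L^2(\R^d)} \le \norm{\chi}_{L^\infty} \norm{f}_{L^2(V)}$ because $\chi$ is supported in $V$; and the Gagliardo seminorm part, which is the only point that requires work. For the seminorm, first observe that $\chi f$ is supported in the compact set $K \coloneqq \Spt \chi \subset\joinrel\subset V$, so the double integral defining $[\chi f]_{H^{1/2}(\R^d)}^2$ over $\R^d \times \R^d$ can be split into the region $K \times K$ (equivalently $V \times V$, since the integrand vanishes off $K$) and the region where at least one of $x,y$ lies outside $K$; by symmetry the latter reduces to $2\iint_{K \times K^c}$.

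For the diagonal-type region I would write, for $x,y \in V$,
\begin{equation}
    \abs{(\chi f)(x) - (\chi f)(y)} \le \abs{\chi(x)}\,\abs{f(x)-f(y)} + \abs{f(y)}\,\abs{\chi(x)-\chi(y)} \le \norm{\chi}_{L^\infty}\abs{f(x)-f(y)} + \norm{\nabla \chi}_{L^\infty}\abs{f(y)}\,\abs{x-y},
\end{equation}
square, use $(a+b)^2 \le 2a^2+2b^2$, and integrate. The first term contributes $\lesssim [f]_{H^{1/2}(V)}^2$ directly. The second term contributes
\begin{equation}
    \iint_{V \times V} \frac{\abs{f(y)}^2 \abs{x-y}^2}{\abs{x-y}^{d+1}}\, dx\, dy = \iint_{V \times V} \frac{\abs{f(y)}^2}{\abs{x-y}^{d-1}}\, dx\, dy \le \norm{f}_{L^2(V)}^2 \sup_{y}\int_{V} \frac{dx}{\abs{x-y}^{d-1}},
\end{equation}
and the inner integral is bounded by a constant depending only on $d$ and $\mathrm{diam}(V)$ since $\abs{x-y}^{-(d-1)}$ is locally integrable in $\R^d$. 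This handles the near-diagonal behaviour and is where the genuine smallness of $1$ in the exponent $H^{1/2}$ is used.

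For the region $K \times K^c$, only the term with $x \in K$ survives (the integrand is $\abs{(\chi f)(x)}^2 / \abs{x-y}^{d+1}$), so I need
\begin{equation}
    \iint_{K \times K^c} \frac{\abs{\chi(x) f(x)}^2}{\abs{x-y}^{d+1}}\, dy\, dx \le \norm{\chi}_{L^\infty}^2 \int_{K} \abs{f(x)}^2 \left( \int_{\{\abs{y-x} \ge \mathrm{dist}(x, K^c)\}} \frac{dy}{\abs{x-y}^{d+1}} \right) dx.
\end{equation}
The inner integral is $\lesssim_d \mathrm{dist}(x,K^c)^{-1}$, and since $x \in K = \Spt \chi \subset\joinrel\subset V$ and $\chi \equiv 1$ on $U$, one has $\mathrm{dist}(x,K^c) \ge \mathrm{dist}(U, (\Spt \chi)^c) > 0$ — wait, more carefully, the relevant quantity is $\mathrm{dist}(x, \partial K)$ bounded below on the support; since $\chi$ is smooth and compactly supported this is not uniform, but one does not need uniformity: near $\partial K$ one has $\chi(x) \to 0$, so I would instead keep the factor $\abs{\chi(x)}^2$ and use that $\abs{\chi(x)}^2 \lesssim \norm{\nabla\chi}_{L^\infty}^2 \mathrm{dist}(x, (\Spt\chi)^c)^2$, which compensates the $\mathrm{dist}^{-1}$ singularity and leaves an integrable (indeed bounded) contribution $\lesssim \norm{f}_{L^2(V)}^2$. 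The constant here depends on $d$, $V$, $U$, $\chi$ as claimed.

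The main obstacle, and the only subtle point, is precisely this last estimate over $K \times K^c$: one must exploit that $\chi$ vanishes to first order at the boundary of its support to absorb the $\mathrm{dist}^{-1}$ blow-up of $\int_{\abs{y-x}\ge \delta} \abs{x-y}^{-(d+1)}\,dy \sim \delta^{-1}$. Everything else is a routine application of the locally integrable bound on $\abs{x-y}^{1-d}$ together with the Lipschitz bound on $\chi$.
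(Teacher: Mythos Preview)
Your proof is correct. Both you and the paper split the Gagliardo double integral and use the Lipschitz bound on $\chi$ for the near-diagonal piece, but you diverge on the far-field piece. The paper isolates once the estimate
\[
\sup_{x\in\R^d}\int_{\R^d}\frac{\abs{\chi(x)-\chi(y)}^2}{\abs{x-y}^{d+1}}\,dy<\infty
\]
(Lipschitz for $\abs{x-y}\le 1$, boundedness for $\abs{x-y}>1$) and then applies it to every term carrying a factor $\abs{\chi(x)-\chi(y)}^2$, including the region $V\times V^c$ where $\chi(y)=0$ forces $\chi(x)-\chi(y)=\chi(x)$. Your route via $\abs{\chi(x)}\lesssim\mathrm{dist}(x,K^c)$ at the edge of the support is valid but is the hard way, as your own parenthetical hesitation suggests: had you split at $V$ rather than at $K=\Spt\chi$, the inner integral $\int_{V^c}\abs{x-y}^{-(d+1)}\,dy$ would be bounded uniformly for $x\in K$ by a constant times $\mathrm{dist}(K,V^c)^{-1}$, and no first-order vanishing argument would be needed. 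What your argument buys is that it still works when there is no gap between $\Spt\chi$ and the ambient set; what the paper's buys is a single clean inequality that dispatches all the cross terms at once.
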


\begin{proof}
It is enough to estimate the Gagliardo seminorm on $\R^d$. Splitting the double integral and recalling that $g$ has compact support in $V$, we compute 
\begin{align}
    [g]_{H^{\sfrac{1}{2}  }(\R^d)}^2 & = \iint_{U \times U} \frac{\abs{g(x)- g(y)}^2}{\abs{x-y}^{d+1}}\, dx \, dy + \iint_{(V \setminus U) \times (V \setminus U)} \frac{\abs{g(x)- g(y)}^2}{\abs{x-y}^{d+1}}\, dx \, dy 
    \\ & \hspace{1 cm}+ 2 \iint_{U \times (V \setminus U)} \frac{\abs{g(x)- g(y)}^2}{\abs{x-y}^{d+1}}\, dx \, dy +  2 \iint_{V \times V^c} \frac{\abs{g(x)- g(y)}^2}{\abs{x-y}^{d+1}}\, dx \, dy
    \\ & = I + II + III + IV. 
\end{align}
Since $\chi \equiv 1$ on $U$, we have that $I = [f]_{H^{\sfrac{1}{2}  }(U)}^2$. Since $\chi$ is Lipschitz continuous, we estimate 
\begin{align}
    \sup_{x \in \R^d} \abs{\int_{\R^d} \frac{\abs{\chi(x)- \chi(y)}^2}{\abs{x-y}^{d+1}} \, dy } \lesssim \sup_{x \in \R^d} \left[ \int_{B_1(x)} \frac{1}{\abs{x-y}^{d-1}} \, dy + \int_{B_1(x)^c} \frac{1}{\abs{x-y}^{d+1}} \, dy \right] < +\infty. 
\end{align}
Hence, we have  
\begin{align}
    II & \lesssim \iint_{(V \setminus U) \times (V \setminus U) } \abs{\chi(x)}^2 \frac{\abs{f(x) - f(y)}^2}{\abs{x-y}^{d+1}}\, dx \, dy + \iint_{(V \setminus U) \times (V \setminus U)} \abs{f(y)}^2 \frac{\abs{\chi(x)-\chi(y)}^2}{\abs{x-y}^{d+1}} \, dx \, dy 
    \\ & \lesssim [f]_{H^{\sfrac{1}{2}  }(V \setminus U)}^2 + \norm{f}_{L^2(V \setminus U)}^2. 
\end{align}
Similarly, we estimate 
\begin{align}
    III & \lesssim [f]_{H^{\sfrac{1}{2}  }(V)}^2 + \norm{f}_{L^2(V\setminus U)}^2. 
\end{align}
Lastly, recalling that $\chi$ has compact support in $V$, we estimate 
\begin{align}
    IV & \lesssim \iint_{V \times V^c} \abs{f(x)}^2 \frac{\abs{\chi(x)- \chi(y)}^2 }{\abs{x-y}^{d+1}}\, dx \, dy \lesssim \norm{f}_{L^2(V)}^2, 
\end{align} 
thus proving \eqref{eq: extension norm}.
\end{proof}

\begin{lem} \label{derivative of H^{1/2}}
Let $U,V$ be bounded open sets such that $U \subset \joinrel \subset V$ and let $f \in H^{\sfrac{1}{2}}(V)$. Then, $\partial_i f \in H^{-\sfrac{1}{2}}_U(V)$ for any $i = 1, \dots, d$. More precisely, we find an implicit constant depending only on $U,V$ such that 
\begin{equation}
    \abs{ \langle \partial_i f, \phi \rangle } \lesssim \norm{f}_{H^{\sfrac{1}{2}  }(V)} \norm{\phi}_{H^{\sfrac{1}{2}  }(V)} \qquad \forall \phi \in H^{\sfrac{1}{2}}_U(V). 
\end{equation}
\end{lem}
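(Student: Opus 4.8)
The plan is to reduce everything to $\R^d$, where $\partial_i$ obviously maps $H^{\sfrac{1}{2}}$ boundedly into $H^{-\sfrac{1}{2}}$ via the Fourier-side characterisation recalled above, and then transfer the estimate back to $V$ by means of the localisation bound in \cref{l: extension lemma}. First I would fix, once and for all, an intermediate open set $W$ with $U \subset \joinrel \subset W \subset \joinrel \subset V$ and a cutoff $\chi \in C^\infty_c(V;[0,1])$ with $\chi \equiv 1$ on $W$, and set $g \coloneqq \chi f$. By \cref{l: extension lemma}, $g \in H^{\sfrac{1}{2}}(\R^d)$ with $\norm{g}_{H^{\sfrac{1}{2}}(\R^d)} \lesssim \norm{f}_{H^{\sfrac{1}{2}}(V)}$, the implicit constant depending only on $d,U,V$ through the fixed choice of $W$ and $\chi$; moreover $g \equiv f$ on the neighbourhood $W$ of $\overline U$.

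Next I would record the elementary mapping property on the whole space: for $h \in H^{\sfrac{1}{2}}(\R^d)$ one has $\widehat{\partial_i h}(\xi) = i \xi_i \hat h(\xi)$, so that $\abs{\xi_i} \leq \abs{\xi} \leq (1+\abs{\xi}^2)^{\sfrac{1}{2}}$ gives $\partial_i h \in H^{-\sfrac{1}{2}}(\R^d)$ with $\norm{\partial_i h}_{H^{-\sfrac{1}{2}}(\R^d)} \lesssim \norm{h}_{H^{\sfrac{1}{2}}(\R^d)}$. For $\phi \in C^\infty_c(U)$, integrating by parts and using that $\chi \equiv 1$ on $\Spt(\partial_i \phi) \subset U \subset W$, one has
\[
\langle \partial_i f, \phi \rangle = - \int_V f \, \partial_i \phi \, dx = - \int_{\R^d} g \, \partial_i \phi \, dx = \langle \partial_i g, \phi \rangle_{\R^d},
\]
so that the distributional derivative $\partial_i f$ agrees, when tested against functions supported in $U$, with $\partial_i g$. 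Combining this with the previous bound and with $\norm{\phi}_{H^{\sfrac{1}{2}}(\R^d)} \lesssim \norm{\phi}_{H^{\sfrac{1}{2}}(V)}$ — which is a further application of \cref{l: extension lemma}, using a cutoff equal to $1$ on $U$, which leaves $\phi$ unchanged because $\Spt \phi \subset U$ — yields $\abs{\langle \partial_i f, \phi \rangle} \lesssim \norm{f}_{H^{\sfrac{1}{2}}(V)} \norm{\phi}_{H^{\sfrac{1}{2}}(V)}$ for all $\phi \in C^\infty_c(U)$. Finally, since $V$ is bounded and $U \subset \joinrel \subset V$, any $\phi \in H^{\sfrac{1}{2}}_U(V)$ has $\Spt \phi$ compactly contained in $U$, so a mollification as in \cref{l: approx by convolution} shows that $C^\infty_c(U)$ is dense in $H^{\sfrac{1}{2}}_U(V)$; hence $\partial_i f$ extends uniquely to an element of $H^{-\sfrac{1}{2}}_U(V)$ satisfying the claimed estimate.

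I do not expect a genuine obstacle here: the only point requiring attention is the bookkeeping with the cutoff functions — placing $W$ and $\chi$ so that \cref{l: extension lemma} is applicable, checking that truncating $f$ does not affect the pairing against functions supported in $U$, and verifying the density of $C^\infty_c(U)$ in $H^{\sfrac{1}{2}}_U(V)$ used to pass from smooth test functions to arbitrary elements of the space.
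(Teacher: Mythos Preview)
Your proposal is correct and follows essentially the same route as the paper: localise $f$ via a cutoff and \cref{l: extension lemma}, invoke the Fourier-side bound $\partial_i\colon H^{\sfrac12}(\R^d)\to H^{-\sfrac12}(\R^d)$, and close by a mollification density argument. The only cosmetic difference is that the paper introduces two intermediate sets $U\subset\joinrel\subset W\subset\joinrel\subset Z\subset\joinrel\subset V$ and proves the estimate first for $\phi\in C^\infty_c(W)$ with convergence $\phi_\e\to\phi$ in $H^{\sfrac12}(Z)$, so that \cref{l: approx by convolution} applies verbatim; your version needs the (true, but slightly stronger) statement that $\phi_\e\to\phi$ in $H^{\sfrac12}(V)$ for compactly supported $\phi$, which you correctly flag as the one point needing care.
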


\begin{proof}
Fix bounded open sets $W,Z$ such that $ U \subset \joinrel \subset W \subset \joinrel \subset Z \subset \joinrel \subset V$ and let $\chi \in C^\infty_c(V; [0,1])$ such that $\chi \equiv 1$ on $W$. Fix $i \in \{1, \dots, d\}$. From now on, we neglect multiplicative constants depending on $d, U,V, W, Z$. Letting $g = \chi f$, by \cref{l: extension lemma} it results that $g \in H^{\sfrac{1}{2}}(\R^d)$. Then, by the properties of Fourier transform, for any $\phi \in C^\infty_c(\R^d)$ we have 
\begin{align}
    \abs{\langle \partial_i g, \phi \rangle } & \lesssim \int_{\R^d} \abs{\xi_i} \abs{\hat{g}(\xi)} \abs{\hat\phi(\xi)} \, d \xi \lesssim \norm{\abs{\xi}^{\sfrac{1}{2}} \hat{g} }_{L^2(\R^d)} \norm{\abs{\xi}^{\sfrac{1}{2}} \hat{\phi} }_{L^2(\R^d)}\lesssim \norm{g}_{H^{\sfrac{1}{2}}(\R^d)} \norm{\phi}_{H^{\sfrac{1}{2}} (\R^d)}, 
\end{align} 
where we use the characterization of fractional Sobolev norms on $\R^d$ with the Fourier transform (see e.g. \cite{DNPV12}*{Proposition 3.7}). Hence, by \eqref{eq: extension norm}, we infer that  
\begin{equation}
    \abs{ \langle \partial_i f, \phi \rangle } \lesssim \norm{f}_{H^{\sfrac{1}{2}  }(Z)} \norm{\phi}_{H^{\sfrac{1}{2}  }(Z)} \qquad \forall \phi \in C^\infty_c(W).   
\end{equation} 
To conclude, we prove that the estimate above holds for any test function $\phi \in H^{\sfrac{1}{2}}_U(V)$. Indeed, given $\phi \in H^{\sfrac{1}{2}}_U(V)$ and letting $\phi_\e$ be the approximation by convolution, by \cref{l: approx by convolution} $\phi_\e \to \phi$ in $H^{\sfrac{1}{2}}(Z)$ and $\phi_\e$ has support in $W$ for $\e$ small enough. 
\end{proof}

\begin{lem} \label{l: convergence in H^-1/2}
Let $U,V$ be bounded open sets such that $U \subset \joinrel \subset V$ and let $f \in H^{\sfrac{1}{2}}(V)$. Let $f_\e$ be defined as in \cref{l: approx by convolution}. Let $\{\phi^\e\}_\e$ be a sequence in $H^{\sfrac{1}{2}}_U(V)$ such that $\phi^\e\to \phi$ in $H^{\sfrac{1}{2}  }(V)$. Then, for any $i = 1, \dots, d$, it holds that 
\begin{equation}
    \lim_{\e \to 0} \langle \partial_i f_\e, \phi^\e\rangle = \langle \partial_i f, \phi \rangle. 
\end{equation}
\end{lem}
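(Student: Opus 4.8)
The plan is to write, for $\e$ small enough,
\[
\langle \partial_i f_\e, \phi^\e \rangle - \langle \partial_i f, \phi \rangle = \langle \partial_i f_\e, \phi^\e - \phi \rangle + \langle \partial_i (f_\e - f), \phi \rangle,
\]
and to show that each term on the right vanishes as $\e \to 0$. To make sure the constants involved are uniform in $\e$, I would first fix, once and for all, intermediate open sets $U \subset \joinrel \subset W \subset \joinrel \subset V_1 \subset \joinrel \subset V_2 \subset \joinrel \subset V$. Since $\phi^\e \to \phi$ in $H^{\sfrac{1}{2}}(V)$, hence in $L^2(V)$, and each $\phi^\e$ vanishes a.e. outside $U$, the limit $\phi$ also vanishes a.e. outside $U$; therefore $\Spt \phi \subseteq \overline{U} \subseteq W$, and $\phi$, $\phi^\e$ and $\phi^\e - \phi$ all belong to $H^{\sfrac{1}{2}}_W(V_1)$, with $H^{\sfrac{1}{2}}(V_1)$-norms bounded by their $H^{\sfrac{1}{2}}(V)$-norms (restriction to a subdomain does not increase the Gagliardo seminorm).

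For the first term, \cref{l: approx by convolution} (applied with $V$ playing the role of the ambient open set) gives $f_\e \in H^{\sfrac{1}{2}}(V_1)$ with $\norm{f_\e}_{H^{\sfrac{1}{2}}(V_1)} \le \norm{f}_{H^{\sfrac{1}{2}}(V_2)} \le \norm{f}_{H^{\sfrac{1}{2}}(V)}$ as soon as $\e < \text{dist}(V_1, (V_2)^c)$, while \cref{derivative of H^{1/2}} applied on the pair $W \subset \joinrel \subset V_1$ yields
\[
\abs{\langle \partial_i f_\e, \psi \rangle} \lesssim \norm{f_\e}_{H^{\sfrac{1}{2}}(V_1)} \norm{\psi}_{H^{\sfrac{1}{2}}(V_1)} \lesssim \norm{f}_{H^{\sfrac{1}{2}}(V)} \norm{\psi}_{H^{\sfrac{1}{2}}(V)} \qquad \text{for all } \psi \in H^{\sfrac{1}{2}}_W(V_1),
\]
with an implicit constant depending only on $d$, $W$ and $V_1$, hence independent of $\e$. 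Choosing $\psi = \phi^\e - \phi$ and using $\phi^\e \to \phi$ in $H^{\sfrac{1}{2}}(V)$ gives $\langle \partial_i f_\e, \phi^\e - \phi \rangle \to 0$. For the second term, \cref{l: approx by convolution} also gives $f_\e \to f$ in $H^{\sfrac{1}{2}}(V_1)$, so applying \cref{derivative of H^{1/2}} to $g = f_\e - f$ we obtain $\abs{\langle \partial_i (f_\e - f), \phi \rangle} \lesssim \norm{f_\e - f}_{H^{\sfrac{1}{2}}(V_1)} \norm{\phi}_{H^{\sfrac{1}{2}}(V_1)} \to 0$. Finally one checks that all the brackets written above agree with the duality pairing between $H^{-\sfrac{1}{2}}_U(V)$ and $H^{\sfrac{1}{2}}_U(V)$ appearing in the statement, since each of them is by construction the continuous extension of the functional $\psi \mapsto - \int g\, \partial_i \psi$ initially defined for $\psi$ smooth and compactly supported in a fixed neighbourhood of $\overline{U}$, and this value is unambiguous.

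The only delicate point is the uniformity in $\e$ of the constant provided by \cref{derivative of H^{1/2}}: this is precisely why the intermediate sets $W \subset \joinrel \subset V_1$ must be frozen before letting $\e \to 0$, and why the \emph{local} bound $\norm{f_\e}_{H^{\sfrac{1}{2}}(V_1)} \le \norm{f}_{H^{\sfrac{1}{2}}(V_2)}$ from \cref{l: approx by convolution} is the right tool — note that $\norm{f_\e}_{H^{\sfrac{1}{2}}(V)}$ itself need not remain bounded as $\e \to 0$, because of the jump of the zero-extension of $f$ across $\partial V$. Everything else is routine bookkeeping with the nested open sets.
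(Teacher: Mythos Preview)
Your proof is correct and follows essentially the same route as the paper: both split $\langle \partial_i f_\e, \phi^\e\rangle - \langle \partial_i f, \phi\rangle$ into the same two pieces, fix intermediate nested open sets to make the constants from \cref{derivative of H^{1/2}} uniform in $\e$, and control $\norm{f_\e}_{H^{\sfrac{1}{2}}}$ via \cref{l: approx by convolution}. The only cosmetic difference is in the term $\langle \partial_i(f_\e - f), \phi\rangle$: the paper transfers the mollifier onto $\phi$ (writing $\langle \partial_i f_\e, \phi\rangle = \langle \partial_i f, \phi_\e\rangle$) and then uses $\phi_\e \to \phi$, whereas you invoke $f_\e \to f$ in $H^{\sfrac{1}{2}}_{\rm loc}(V)$ directly --- both are immediate from \cref{l: approx by convolution} and \cref{derivative of H^{1/2}}.
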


\begin{proof}
Fix bounded open sets $W,Z$ such that $ U \subset \joinrel \subset W \subset \joinrel \subset Z \subset \joinrel \subset V$ and define 
$$\e_0 = \min\{ \text{dist}(U, W^c), \text{dist} (W, Z^c), \text{dist}(Z, V^c)\}. $$
We neglect constants depending only on $U,V, Z, W,d$. Hence, for any $\e \leq \e_0$, since $\phi_\e = \tilde{\phi} * \rho_\e$ has support in $W$ ($\tilde{\phi}$ is the extension of $\phi$ to $0$ outside $V$), by \cref{l: approx by convolution} we have 
\begin{align} 
    \abs{\langle \partial_i f_\e, \phi^\e \rangle - \langle \partial_i f, \phi\rangle } & \leq \abs{\langle \partial_i f_\e - \partial_i f, \phi \rangle} + \abs{\langle \partial_i f_\e, \phi- \phi^\e \rangle} 
    \\ & \lesssim \abs{\langle \partial_i f, \phi- \phi_\e\rangle} + \norm{f_\e}_{H^{\sfrac{1}{2}  }(Z)} \norm{\phi- \phi_\e}_{H^{\sfrac{1}{2}  }(Z)}
    \\ & \lesssim \norm{f}_{H^{\sfrac{1}{2}  }(V)} \norm{\phi- \phi_\e}_{H^{\sfrac{1}{2}  }(V)}.
\end{align} 
\end{proof}

\begin{lem} \label{l: time dependent duality}
Let $U,V$ be bounded open sets such that $U \subset \joinrel \subset V$, let $I$ be a time interval and let $f \in L^2(I; H^{\sfrac{1}{2}}(V))$. Fix $i \in \{1, \dots,d\}$. Then, $\partial_i f \in (L^2(I; H^{\sfrac{1}{2}}_U(V)))'$ and there exists an implicit constant depending only on $U,V$ such that 
$$ \norm{\partial_i f}_{(L^2(I; H^{\sfrac{1}{2}}_U(V)))'} \lesssim \norm{f}_{L^2(I; H^{\sfrac{1}{2}}(V))}.$$
\end{lem}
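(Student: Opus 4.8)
The plan is to reduce the time-dependent duality statement to the stationary one already established in \cref{derivative of H^{1/2}} by testing against functions of the product form and then exploiting density. Recall that \cref{derivative of H^{1/2}} gives, for each fixed time, the bound $\abs{\langle \partial_i f(\cdot,t), \psi \rangle} \lesssim \norm{f(\cdot,t)}_{H^{\sfrac 12}(V)} \norm{\psi}_{H^{\sfrac 12}(V)}$ for every $\psi \in H^{\sfrac 12}_U(V)$, with a constant depending only on $U,V,d$. The first step is to make sense of $\partial_i f$ as an element of $(L^2(I; H^{\sfrac 12}_U(V)))'$: for $\Phi \in L^2(I; H^{\sfrac 12}_U(V))$ set $\langle \partial_i f, \Phi \rangle := \int_I \langle \partial_i f(\cdot,t), \Phi(\cdot,t)\rangle\, dt$, where the integrand is the pairing from \cref{derivative of H^{1/2}}. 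For this to be well-defined I need to check that $t \mapsto \langle \partial_i f(\cdot,t), \Phi(\cdot,t)\rangle$ is measurable; this follows because $f \in L^2(I; H^{\sfrac 12}(V))$ and $\Phi \in L^2(I; H^{\sfrac 12}_U(V))$ are (strongly) measurable, hence approximable by simple functions, and the bilinear pairing is continuous, so the composition is measurable.

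Next I would prove the bound. Applying the stationary estimate pointwise in $t$ and then Cauchy--Schwarz in time,
\begin{align}
    \abs{\langle \partial_i f, \Phi\rangle} & \leq \int_I \abs{\langle \partial_i f(\cdot,t), \Phi(\cdot,t)\rangle}\, dt \lesssim \int_I \norm{f(\cdot,t)}_{H^{\sfrac 12}(V)} \norm{\Phi(\cdot,t)}_{H^{\sfrac 12}(V)}\, dt
    \\ & \leq \left( \int_I \norm{f(\cdot,t)}_{H^{\sfrac 12}(V)}^2\, dt\right)^{\sfrac 12} \left( \int_I \norm{\Phi(\cdot,t)}_{H^{\sfrac 12}(V)}^2\, dt\right)^{\sfrac 12} = \norm{f}_{L^2(I; H^{\sfrac 12}(V))} \norm{\Phi}_{L^2(I; H^{\sfrac 12}_U(V))},
\end{align}
with the implicit constant inherited from \cref{derivative of H^{1/2}} and therefore depending only on $U,V$. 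This shows $\Phi \mapsto \langle \partial_i f, \Phi\rangle$ is a bounded linear functional on $L^2(I; H^{\sfrac 12}_U(V))$ with the claimed norm bound. One should also verify that this functional genuinely represents the distributional derivative $\partial_i f$ on $I \times V$, i.e. that it agrees with $-\int_I \int_V f\, \partial_i \Phi$ for $\Phi \in C^\infty_c(I \times U)$; this is immediate by Fubini together with the fact that the stationary pairing in \cref{derivative of H^{1/2}} is the distributional one, so the two definitions are consistent on the dense class of smooth compactly supported test functions.

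The only genuinely delicate point is the measurability of $t \mapsto \langle \partial_i f(\cdot,t), \Phi(\cdot,t)\rangle$ and, relatedly, whether the constant in \cref{derivative of H^{1/2}} can be chosen uniformly (which it can, since it depends only on the geometric data $U, V, d$ and the fixed cutoff $\chi$, not on the function). I expect this to be routine: strong measurability of the Bochner-integrable maps $t\mapsto f(\cdot,t)$ and $t\mapsto \Phi(\cdot,t)$ plus joint continuity of the duality pairing $H^{-\sfrac 12}_U(V) \times H^{\sfrac 12}_U(V) \to \R$ (applied after identifying $\partial_i f(\cdot,t)$ via \cref{derivative of H^{1/2}}) yields measurability of the scalar function, and finiteness of its integral follows from the estimate above. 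Everything else is a direct transcription of the stationary argument, so no further obstacle is anticipated.
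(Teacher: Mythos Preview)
Your argument is correct and essentially parallel to the paper's, with one difference in how measurability of $t \mapsto \langle \partial_i f(\cdot,t), \Phi(\cdot,t)\rangle$ is obtained. You invoke the abstract fact that strongly measurable Bochner functions composed with a continuous bilinear pairing remain measurable. The paper instead argues concretely via spatial mollification: for the mollified quantities $f_\e, \phi_\e$ the pairing $\langle \partial_i f_\e(t), \phi_\e(t)\rangle$ is an honest integral in $x$, hence measurable in $t$ by Fubini, and then \cref{l: convergence in H^-1/2} gives pointwise-a.e.\ convergence to $\langle \partial_i f(t), \phi(t)\rangle$, so the limit is measurable; the integral identity and the norm bound follow by dominated convergence using the uniform estimates of \cref{l: approx by convolution} and \cref{derivative of H^{1/2}}. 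Your route is shorter and relies on standard Bochner-integration facts; the paper's route is more self-contained and also previews the mollification machinery used in the proof of \cref{th:main-helicity}. The final estimate (Cauchy--Schwarz in time on top of the stationary bound) is identical in both.
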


\begin{proof}
Let $f_\e, \phi_\e$ be the spatial mollification of $f,\phi$ respectively. For any $i = 1, \dots, d$, by \cref{l: convergence in H^-1/2} it holds that 
\begin{equation}
    \lim_{\e \to 0} \langle \partial_i f_\e(t), \phi_\e(t) \rangle = \langle \partial_i f(t), \phi(t) \rangle
\end{equation}
for a.e. $t \in I$. Since the functions $t \mapsto \langle \partial_i f_\e(t), \phi_\e(t)\rangle$ are measurable for any $\e$ by Fubini's theorem, we infer that the same holds for $t \mapsto \langle \partial_i f(t), \phi(t)\rangle$. Then, by the estimates of \cref{l: approx by convolution}, \cref{derivative of H^{1/2}} and by dominated convergence, we infer that 
\begin{equation}
    \lim_{\e \to 0} \int_I \langle \partial_i f_\e(t), \phi_\e(t) \rangle \, dt = \int_I \langle \partial_i f(t), \phi(t) \rangle \, dt, 
\end{equation}
which defines the duality relation. Moreover, we have that 
\begin{equation}
    \abs{  \int_I \langle \partial_i f(t), \phi(t) \rangle \, dt } \lesssim \norm{f}_{L^2(I; H^{\sfrac{1}{2}}(V))} \norm{\phi}_{L^2(I; H^{\sfrac{1}{2}}(V))}. 
\end{equation}
\end{proof}

Thanks to \cref{l: time dependent duality}, given $f, g \in L^2_t H^{\sfrac{1}{2}}_x$ we can define $f  \partial_i g$ in $\mathcal{D}'(\Omega \times (0,T))$. 

\begin{Def} \label{coupling in H^1/2 - H^-1/2}
Let $\Omega \subset \R^3$ be any open set and let $f,g \in L^2_{\rm loc}((0,T); H^{\sfrac{1}{2}}_{\rm loc}(\Omega))$. We define the product $f \partial_i g$ in $\mathcal{D}'(\Omega \times (0,T))$ by \begin{equation}
\langle f  \partial_i g , \phi \rangle = \int_0^T \langle \partial_i g(t), f(t)\phi(t) \rangle  \, dt \qquad \forall \phi \in C^\infty_c(\Omega \times (0,T)). \label{helicity equation}
\end{equation}
\end{Def}

\begin{lem} \label{H12 is algebra}
Let $\Omega$ be any open set and let $f,g \in H^{\sfrac{1}{2}  } \cap L^\infty(\Omega)$. Then, $fg \in H^{\sfrac{1}{2}  }\cap L^\infty(\Omega)$ and, up to a universal constant, it holds  
\begin{equation}
    \norm{fg}_{H^{\sfrac{1}{2}  }(\Omega)} \lesssim \norm{f}_{L^\infty(\Omega)} \norm{g}_{L^2(\Omega)} + \norm{g}_{L^\infty(\Omega)} [f]_{H^{\sfrac{1}{2}  }(\Omega)} + \norm{f}_{L^\infty(\Omega)} [g]_{H^{\sfrac{1}{2}  }(\Omega)}. 
\end{equation}
\end{lem}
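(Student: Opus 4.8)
The plan is to estimate the two pieces of the $H^{\sfrac{1}{2}}$ norm separately, the $L^2$ part and the Gagliardo seminorm. The $L^2$ and $L^\infty$ contributions are immediate: since $g \in L^2(\Omega)$ and $f \in L^\infty(\Omega)$, one has $\norm{fg}_{L^2(\Omega)} \leq \norm{f}_{L^\infty(\Omega)} \norm{g}_{L^2(\Omega)}$, so in particular $fg \in L^2(\Omega)$, and likewise $fg \in L^\infty(\Omega)$ with $\norm{fg}_{L^\infty(\Omega)} \leq \norm{f}_{L^\infty(\Omega)} \norm{g}_{L^\infty(\Omega)}$.

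For the Gagliardo seminorm, the key is the elementary Leibniz-type identity
\begin{equation}
f(x)g(x) - f(y)g(y) = f(x)\bigl( g(x) - g(y) \bigr) + g(y) \bigl( f(x) - f(y) \bigr),
\end{equation}
valid for a.e. $(x,y) \in \Omega \times \Omega$. Squaring, using $\abs{a+b}^2 \leq 2\abs{a}^2 + 2\abs{b}^2$ together with the pointwise bounds $\abs{f(x)} \leq \norm{f}_{L^\infty(\Omega)}$ and $\abs{g(y)} \leq \norm{g}_{L^\infty(\Omega)}$, one gets
\begin{equation}
\frac{\abs{f(x)g(x) - f(y)g(y)}^2}{\abs{x-y}^{d+1}} \leq 2 \norm{f}_{L^\infty(\Omega)}^2 \frac{\abs{g(x)-g(y)}^2}{\abs{x-y}^{d+1}} + 2 \norm{g}_{L^\infty(\Omega)}^2 \frac{\abs{f(x)-f(y)}^2}{\abs{x-y}^{d+1}}.
\end{equation}
Integrating over $\Omega \times \Omega$ yields $[fg]_{H^{\sfrac{1}{2}}(\Omega)}^2 \leq 2 \norm{f}_{L^\infty(\Omega)}^2 [g]_{H^{\sfrac{1}{2}}(\Omega)}^2 + 2 \norm{g}_{L^\infty(\Omega)}^2 [f]_{H^{\sfrac{1}{2}}(\Omega)}^2$, and hence, by subadditivity of the square root, $[fg]_{H^{\sfrac{1}{2}}(\Omega)} \lesssim \norm{f}_{L^\infty(\Omega)} [g]_{H^{\sfrac{1}{2}}(\Omega)} + \norm{g}_{L^\infty(\Omega)} [f]_{H^{\sfrac{1}{2}}(\Omega)}$.

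Summing the $L^2$ bound and the seminorm bound gives the claimed inequality. There is no genuine obstacle here; the only point requiring a moment's care is to split the product so that each $L^\infty$ factor multiplies the difference quotient of the \emph{other} function, and the symmetry of the domain $\Omega \times \Omega$ makes the choice of evaluating the bounded factors at $x$ rather than at $y$ (or vice versa) immaterial.
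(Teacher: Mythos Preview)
Your proof is correct and follows essentially the same approach as the paper: both bound $\norm{fg}_{L^2}$ by $\norm{f}_{L^\infty}\norm{g}_{L^2}$ and then control the Gagliardo seminorm via the Leibniz splitting $f(x)g(x)-f(y)g(y)=f(x)(g(x)-g(y))+g(y)(f(x)-f(y))$ together with $|a+b|^2\le 2|a|^2+2|b|^2$. The paper's write-up is slightly terser and records the last term as $\norm{f}_{L^\infty}^2\norm{g}_{H^{\sfrac{1}{2}}}^2$ rather than your sharper $\norm{f}_{L^\infty}^2[g]_{H^{\sfrac{1}{2}}}^2$, but the argument is the same.
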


\begin{proof}
By direct computation, we have that 
\begin{align}
\norm{fg}_{L^2(\Omega)} & \leq \norm{f}_{L^\infty(\Omega)} \norm{g}_{L^2(\Omega)},
\\ [fg]_{H^{\sfrac{1}{2}  }(\Omega)}^2 & = \iint_{\Omega \times \Omega} \frac{\abs{f(x) g(x)- f(y) g(y)}^2}{\abs{x-y}^{d+1}} \, dx \, dy 
\\ & \lesssim \iint_{\Omega \times \Omega} \frac{\abs{g(x)}^2 \abs{f(x) - f(y) }^2}{\abs{x-y}^{d+1}} \, dx \, dy + \iint_{ \Omega \times \Omega} \frac{\abs{f(y)}^2 \abs{ g(x)-  g(y)} ^2}{\abs{x-y}^{d+1}} \, dx \, dy 
\\ & \lesssim \norm{g}_{L^\infty(\Omega)}^2 [f]_{H^{\sfrac{1}{2}  }(\Omega)}^2 + \norm{f}_{L^\infty(\Omega)}^2 \norm{g}_{H^{\sfrac{1}{2}  }(\Omega)}^2. 
\end{align}
\end{proof}

\begin{lem} \label{convergence of the squares}
Let $\Omega$ be any open set, $f,g \in H^{\sfrac{1}{2}  }_{\rm loc} \cap L^\infty_{\rm loc}(\Omega)$ and let $\rho$ be a standard mollifier. Define $f_\e = \tilde{f}* \rho_\e, g_\e = \tilde{g}* \rho_\e$, where $\tilde{f}, \tilde{g}$ are the extensions of $f,g$ to $0$ outside $\Omega$. Then $f_\e g_\e \to f g$ in $H^{\sfrac{1}{2}  }_{\rm loc}(\Omega)$. 
\end{lem}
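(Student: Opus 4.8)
Here is a plan of proof.

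The plan is to fix bounded open sets $U \subset \joinrel \subset V \subset \joinrel \subset \Omega$ and prove that $f_\e g_\e \to fg$ in $H^{\sfrac{1}{2}}(U)$; since such $U$ are arbitrary this yields the claim (and in particular $fg \in H^{\sfrac{1}{2}}_{\rm loc}(\Omega)$, which also follows by applying \cref{H12 is algebra} on $V$). Set $a_\e \coloneqq f_\e - f$ and $b_\e \coloneqq g_\e - g$. By \cref{l: approx by convolution} one has $a_\e, b_\e \to 0$ both in $H^{\sfrac{1}{2}}(U)$ and in $L^2(U)$, while, for $\e$ small, $\norm{a_\e}_{L^\infty(U)} \le 2\norm{f}_{L^\infty(V)}$ and $\norm{b_\e}_{L^\infty(U)} \le 2\norm{g}_{L^\infty(V)}$, because $\norm{f_\e}_{L^\infty(U)} \le \norm{f}_{L^\infty(V)}$. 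The starting point is then the splitting
\begin{equation}
    f_\e g_\e - fg = a_\e\, g + f\, b_\e + a_\e\, b_\e .
\end{equation}

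The cross term is harmless: by \cref{H12 is algebra} applied on $U$,
\begin{equation}
    \norm{a_\e b_\e}_{H^{\sfrac{1}{2}}(U)} \lesssim \norm{a_\e}_{L^\infty(U)}\norm{b_\e}_{L^2(U)} + \norm{b_\e}_{L^\infty(U)}[a_\e]_{H^{\sfrac{1}{2}}(U)} + \norm{a_\e}_{L^\infty(U)}[b_\e]_{H^{\sfrac{1}{2}}(U)},
\end{equation}
and the right-hand side tends to $0$ as $\e \to 0$, since in each of the three products one factor is uniformly bounded and the other vanishes. It therefore remains to show $f b_\e \to 0$ in $H^{\sfrac{1}{2}}(U)$ and, symmetrically, $a_\e g \to 0$ in $H^{\sfrac{1}{2}}(U)$; the latter follows by the very same argument, exchanging the roles of the fixed factor ($f$ versus $g$) and of the vanishing factor ($b_\e$ versus $a_\e$).

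For $f b_\e$ the $L^2(U)$ bound is immediate from $\norm{f b_\e}_{L^2(U)} \le \norm{f}_{L^\infty(U)}\norm{b_\e}_{L^2(U)}$. For the Gagliardo seminorm I would use the Leibniz identity $f(x)b_\e(x) - f(y)b_\e(y) = f(x)(b_\e(x)-b_\e(y)) + b_\e(y)(f(x)-f(y))$, which gives
\begin{equation}
    [f b_\e]_{H^{\sfrac{1}{2}}(U)}^2 \lesssim \norm{f}_{L^\infty(U)}^2 [b_\e]_{H^{\sfrac{1}{2}}(U)}^2 + \iint_{U\times U}\frac{\abs{b_\e(y)}^2\abs{f(x)-f(y)}^2}{\abs{x-y}^{d+1}}\,dx\,dy .
\end{equation}
The first summand vanishes since $[b_\e]_{H^{\sfrac{1}{2}}(U)}\to 0$. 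For the second summand the naive bound $\norm{b_\e}_{L^\infty(U)}^2[f]_{H^{\sfrac{1}{2}}(U)}^2$ is merely bounded, not small, so instead, for $\alpha>0$, I would split the region of integration according to whether $\abs{x-y}<\alpha$ or $\abs{x-y}\ge\alpha$, exactly as in the proof of \cref{l: approx by convolution}. On the near-diagonal region, bounding $\abs{b_\e(y)}^2 \le \norm{b_\e}_{L^\infty(U)}^2 \le 4\norm{g}_{L^\infty(V)}^2$, the piece is controlled by $4\norm{g}_{L^\infty(V)}^2 \iint_{\{\abs{x-y}<\alpha\}\cap(U\times U)}\frac{\abs{f(x)-f(y)}^2}{\abs{x-y}^{d+1}}\,dx\,dy$, which tends to $0$ as $\alpha\to0$ uniformly in $\e$, by dominated convergence, since $f\in H^{\sfrac{1}{2}}(U)$. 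On the far-diagonal region, bounding $\abs{x-y}^{-d-1}\le\alpha^{-d-1}$ and $\int_U\abs{f(x)-f(y)}^2\,dx \lesssim \norm{f}_{L^2(U)}^2 + \mathcal{L}^d(U)\norm{f}_{L^\infty(U)}^2$, the piece is controlled by a constant (depending on $\alpha$, $f$, $U$) times $\norm{b_\e}_{L^2(U)}^2$, which tends to $0$ as $\e\to0$ for each fixed $\alpha$. Taking $\limsup_{\e\to0}$ and then letting $\alpha\to0$ shows the second summand vanishes; hence $f b_\e\to0$ in $H^{\sfrac{1}{2}}(U)$, and combining the three pieces gives $f_\e g_\e\to fg$ in $H^{\sfrac{1}{2}}(U)$.

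The main obstacle is the one just flagged: because $f$ and $g$ are merely bounded and not continuous, $\norm{b_\e}_{L^\infty(U)}$ need not tend to $0$, so $b_\e\to0$ only in $H^{\sfrac{1}{2}}(U)\cap L^2(U)$ and not in $H^{\sfrac{1}{2}}(U)\cap L^\infty(U)$; consequently one cannot invoke continuity of multiplication, nor \cref{H12 is algebra} as a black box, for the term $f b_\e$, since that leaves the non-vanishing contribution $\norm{b_\e}_{L^\infty(U)}[f]_{H^{\sfrac{1}{2}}(U)}$. The near/far-diagonal splitting is precisely what converts the genuine smallness one does have — the $L^2$-smallness of $b_\e$ together with the absolute continuity of the Gagliardo integral of the fixed function $f$ — into smallness of $[f b_\e]_{H^{\sfrac{1}{2}}(U)}$; the remaining steps are routine.
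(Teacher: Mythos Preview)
Your proof is correct and follows essentially the same route as the paper's: the key device in both is the near/far-diagonal splitting $D_\alpha = \{\abs{x-y}\le\alpha\}$ of the Gagliardo double integral, using the $L^\infty$ bound plus absolute continuity of $[f]_{H^{\sfrac{1}{2}}}$ on the near part and the $L^2$-smallness on the far part, then sending $\e\to0$ followed by $\alpha\to0$. The only cosmetic difference is that you first decompose $f_\e g_\e - fg = a_\e g + f b_\e + a_\e b_\e$ and apply the splitting to the single recalcitrant integral $\iint \abs{b_\e(y)}^2\abs{f(x)-f(y)}^2/\abs{x-y}^{d+1}$, whereas the paper applies the $D_\alpha$ splitting directly to $[(f_\e g_\e)-fg]_{H^{\sfrac{1}{2}}(U)}^2$ as a whole.
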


\begin{proof} 
Fix bounded open sets $U, V$ such that $ U \subset \joinrel \subset V \subset \joinrel \subset \Omega$ and let $\e \leq \e_0 = \text{dist}(U, V^c)$. Fix $\alpha >0$ and set $D_\alpha : = \{ (x,y) \in \R^d \times \R^d \colon \abs{x-y} \leq \alpha \}$. We neglect constants depending on $d, U,V$. Then, for any $\e \leq \e_0$, since $\norm{f_\e}_{L^\infty(U)} \leq \norm{f}_{L^\infty(V)}$ and the same holds for $g_\e,g$, we estimate
\begin{align}
    & \iint_{D_\alpha \cap (U \times U) }  \frac{ \abs{(f_\e g_\e)(x) - (f_\e g_\e)(y)}^2}{\abs{x-y}^{d+1}} \, dy \, dx 
    \\ & \hspace{0.5 cm} \lesssim \iint_{D_\alpha \cap  (U \times U) } \abs{g_\e(x)}^2 \frac{ \abs{f_\e(x) - f_\e(y)}^2}{\abs{x-y}^{d+1}} \, dy \, dx + \iint_{D_\alpha \cap ( U \times U )} \abs{f_\e(y)}^2 \frac{ \abs{g_\e(x) - g_\e(y)}^2}{\abs{x-y}^{d+1}} \, dy \, dx
    \\ & \hspace{0.5 cm} \lesssim \norm{g}_{L^\infty(V)}^2 \iint_{D_\alpha \cap ( V \times V )} \frac{ \abs{f(x)-f(y)}^2 }{\abs{x-y}^{d+1}} \, dy \, dx + \norm{f}_{L^\infty(V)}^2 \iint_{D_\alpha \cap (V \times V)} \frac{ \abs{g(x)-g(y)}^2 }{\abs{x-y}^{d+1}} \, dy \, dx, 
\end{align}
as in the proof of \cref{l: approx by convolution}. Then, we write 
\begin{align}
     [(f_\e g_\e)-f g]_{H^{\sfrac{1}{2}  }(U)}^2 & = \left[ \iint_{D_\alpha \cap ( U \times U)} + \iint_{D_\alpha^c \cap ( U \times U )} \right] \frac{\abs{(f_\e g_\e)(x) -(f_\e g_\e)(y) -(fg)(x) +(fg)(y)}^2}{\abs{x-y}^{d+1}}\, dx \, dy  
    \\ & \lesssim \iint_{D_\alpha \cap (V \times V)} \frac{ \abs{f(x)-f(y)}^2 }{\abs{x-y}^{d+1}} \, dy \, dx +  \iint_{D_\alpha \cap ( V \times V)} \frac{ \abs{g(x)-g(y)}^2 }{\abs{x-y}^{d+1}} \, dy \, dx 
    \\ & \hspace{ 1 cm} + \alpha^{-1-d} \norm{fg-f_\e g_\e}_{L^2(U)}^2. 
\end{align}
Letting $\e \to 0$ and recalling that $f_\e g_\e \to f g$ in $L^2(U)$, we have that 
\begin{equation}
    \limsup_{\e \to 0} [f-f_\e]_{H^{\sfrac{1}{2}  }(U)}^2 \lesssim \iint_{D_\alpha \cap ( V \times V)} \frac{\abs{f(x)- f(y)}^2}{\abs{x-y}^{d+1}} \, dx \, dy + \iint_{D_\alpha \cap ( V \times V)} \frac{ \abs{g(x)-g(y)}^2 }{\abs{x-y}^{d+1}} \, dy \, dx. 
\end{equation}
Letting $\alpha \to 0$, then the right hand side goes to $0$ by dominated convergence, since $f,g \in H^{\sfrac{1}{2}  }(V)$. 
\end{proof}

\subsection{Some commutator estimates of Besov vector fields} \label{ss: commutator estimates and besov}

We recall the definition of Besov spaces in arbitrary open sets (see the monograph \cite{T83} for an extensive presentation of the topic) and we provide some standard mollification estimates in the spirit of \cites{CET94, DR19}. We include the short proof for the reader's convenience. Let $O \subset \joinrel \subset \Omega \subset \R^d$ be open sets, $\theta \in (0,1)$ and $p \in [1, +\infty]$. Given $u \in L^p(O)$ , we say that $u \in B^\theta_{p, \infty}(O)$ if it holds that
\begin{equation} \label{eq: besov}
    \sup_{\abs{h} \leq \text{dist}(O, \Omega^c)} \frac{\norm{u(\cdot + h) - u(\cdot)}_{L^p(O)}}{\abs{h}^\theta} =: [u]_{B^\theta_{p, \infty}(O)} < + \infty. 
\end{equation} 
We denote by 
$$\norm{u}_{B^\theta_{p,\infty}(O)} := [u]_{B^\theta_{p,\infty}(O)} + \norm{u}_{L^p(O)}. $$
We say that $u \in B^\theta_{p, c_0}(O)$ if $u \in L^p(O)$ and there exists a modulus of continuity $\ell_{u, O}$ such that for any $\e \leq \text{dist}(O, \Omega^c)$ it holds that 
\begin{equation} \label{eq: little besov}
    \sup_{\abs{h} \leq \e} \frac{\norm{u(\cdot + h) - u(\cdot)}_{L^p(O)}}{\abs{h}^\theta} \leq \ell_{u, O}(\e). 
\end{equation}
We say that $u \in L^q(I; B^\theta_{p, c_0}(O))$ if $u \in L^q(I; B^\theta_{p, \infty}(O))$ and $u(t) \in B^\theta_{p, c_0}(O)$ for a.e. $t \in I$. 

\begin{lem} \label{l: commutator estimates}
Let $O \subset \joinrel \subset  \Omega \subset \R^d$ be open sets and let $f,g \in B^\theta_{p, c_0}(O)$. Let $\rho$ be a standard mollifier and for any $\e \leq \e_0 = \text{dist}(U, \Omega^c)$ let $f_\e = f*\rho_\e, g_\e = g* \rho_\e$. There are implicit constants independent on $f,g,\e,k$ such that for any $\e \leq \e_0$ it holds  
\begin{equation} \label{eq: besov gradient}
    \norm{\nabla^k f_\e}_{L^p(U)} \lesssim \e^{-k + \theta} \norm{f}_{B^\theta_{p, \infty} (V) } \ell_{f,U}(\e) \qquad \forall k \geq 1,  
\end{equation}
\begin{equation} \label{eq: besov quadratic}
    \norm{f_\e g_\e - (fg)_\e}_{L^{\sfrac{p}{2}}(U)} \lesssim \e^{2\theta} \norm{f}_{B^\theta_{p, \infty}(V)} \norm{g}_{B^\theta_{p, \infty}} \ell_{f,U}(\e) \ell_{g, U}(\e) \qquad \text{ if } p \geq 2. 
\end{equation}
\end{lem}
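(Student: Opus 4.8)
These are the classical Constantin--E--Titi commutator estimates \cite{CET94} localised to open sets, so the plan is a bookkeeping exercise built only on Minkowski's integral inequality, H\"older's inequality and the defining property \eqref{eq: little besov} of $B^\theta_{p, c_0}$. First I would fix a chain of open sets $U \subset \joinrel \subset V \subset \joinrel \subset \Omega$ and restrict throughout to $\e$ below a threshold $\e_0$ depending only on these sets, so that every translate $f(\cdot - z)$ with $\abs{z} \le \e$ that appears is defined on $U$ and controlled there by the Besov modulus of $f$; I write $\delta_z f(x) := f(x-z) - f(x)$, use the scaling $\nabla^k \rho_\e(z) = \e^{-d-k}(\nabla^k \rho)(z/\e)$, and suppress constants depending only on $d, \theta, p, \rho$ and the fixed domains. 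The elementary input I will use repeatedly is that, by Minkowski's integral inequality and \eqref{eq: little besov}, for any kernel $K \in L^1(B_\e)$ one has $\norm{\int_{B_\e} K(z)\, \delta_z f\, dz}_{L^p(U)} \le \ell_{f,U}(\e) \int_{B_\e} \abs{z}^\theta \abs{K(z)}\, dz$.

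For \eqref{eq: besov gradient}, the point is that for $k \ge 1$ the kernel $\nabla^k \rho_\e$ has vanishing integral, being a derivative of order $k\ge 1$ of the compactly supported function $\rho_\e$; hence
\[
  \nabla^k f_\e(x) = \int_{B_\e} \nabla^k \rho_\e(z)\, f(x-z)\, dz = \int_{B_\e} \nabla^k \rho_\e(z)\, \delta_z f(x)\, dz .
\]
Applying the displayed elementary bound with $K = \nabla^k \rho_\e$ and computing $\int_{B_\e} \abs{z}^\theta \abs{\nabla^k \rho_\e(z)}\, dz = \e^{\theta - k} \int \abs{w}^\theta \abs{\nabla^k \rho(w)}\, dw$ by the change of variables $w = z/\e$, I obtain $\norm{\nabla^k f_\e}_{L^p(U)} \lesssim \e^{\theta - k} \ell_{f,U}(\e)$, which is \eqref{eq: besov gradient} (if the verbatim form is wanted, one additionally bounds one copy of the modulus by $\ell_{f,U}(\e_0) \lesssim \norm{f}_{B^\theta_{p, \infty}(V)}$).

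For \eqref{eq: besov quadratic}, I would start from the Constantin--E--Titi algebraic identity, obtained by expanding the right-hand side and using $\int \rho_\e = 1$:
\[
  f_\e g_\e - (fg)_\e = (f_\e - f)(g_\e - g) - r_\e, \qquad r_\e(x) := \int_{B_\e} \rho_\e(z)\, \delta_z f(x)\, \delta_z g(x)\, dz .
\]
Since $p \ge 2$, the exponent $p/2 \ge 1$, so $\norm{\cdot}_{L^{p/2}}$ is a genuine norm and the H\"older inclusion $L^p \cdot L^p \hookrightarrow L^{p/2}$ is legitimate. For the first term this gives $\norm{(f_\e - f)(g_\e - g)}_{L^{p/2}(U)} \le \norm{f_\e - f}_{L^p(U)} \norm{g_\e - g}_{L^p(U)}$, while $\norm{f_\e - f}_{L^p(U)} = \norm{\int_{B_\e} \rho_\e(z)\, \delta_z f\, dz}_{L^p(U)} \lesssim \e^\theta \ell_{f,U}(\e)$ by the elementary bound and likewise for $g$, so this term is $\lesssim \e^{2\theta} \ell_{f,U}(\e)\ell_{g,U}(\e)$. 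For the commutator term, Minkowski's inequality in $L^{p/2}(U)$ followed by H\"older gives $\norm{r_\e}_{L^{p/2}(U)} \le \int_{B_\e} \rho_\e(z)\, \norm{\delta_z f}_{L^p(U)} \norm{\delta_z g}_{L^p(U)}\, dz \le \ell_{f,U}(\e) \ell_{g,U}(\e) \int_{B_\e} \rho_\e(z)\, \abs{z}^{2\theta}\, dz \lesssim \e^{2\theta} \ell_{f,U}(\e)\ell_{g,U}(\e)$. Summing the two contributions yields \eqref{eq: besov quadratic}, again up to trading modulus factors for $B^\theta_{p, \infty}(V)$-norms if the statement is wanted exactly as printed.

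I do not expect a genuine obstacle: everything reduces to Minkowski's inequality and the translation bound \eqref{eq: little besov}. The two places that require a moment of care are the algebraic identity for $r_\e$, where it is essential that both increments are centred at the same point $x$ so that a single translation variable controls the whole error, and the role of the hypothesis $p \ge 2$, which is exactly what makes $\norm{\cdot}_{L^{p/2}}$ subadditive and licenses the H\"older split used on both pieces. The slight mismatch of open sets in the statement ($O$ versus $U, V$) is harmless and is handled by fixing the chain $U \subset \joinrel \subset V \subset \joinrel \subset \Omega$ and the corresponding $\e_0$ at the outset.
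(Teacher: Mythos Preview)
Your proposal is correct and follows the same strategy as the paper: the same vanishing-mean identity for $\nabla^k f_\e$ and the same Constantin--E--Titi decomposition $(fg)_\e - f_\e g_\e = r_\e - (f_\e-f)(g_\e-g)$, with the only cosmetic difference that you invoke Minkowski's integral inequality where the paper phrases the same step via Jensen's inequality.
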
 

\begin{proof} 
We assume $p < \infty, k \geq 1$ and we neglect constants depending on $k,p, \rho$. The case $p = \infty$ is completely analogous. Following \cite{DR19}*{Proposition 2.1}, given $k \geq 1$ and $\e \leq \e_0$ we have that 
\begin{equation}
    \nabla^k f_\e(x) = \e^{-k} \int_{B_1} (f(x-\e y) - f(x)) \nabla^k \rho(y) \, dy.   
\end{equation}
Then, using Jensen's inequality, we infer that 
\begin{align}
    \int_{U} \abs{\nabla f_\e(x)}^p \, dx & \lesssim \e^{-kp} \int_{B_1} \int_U \abs{f(x-\e y)-f(x)}^p \, dx \abs{\nabla^k \rho(y)} \, dy \lesssim \e^{p (\theta-k)} [f]_{B^\theta_{p, \infty}(O)}^p \ell_{f, O}(\e)^p. 
\end{align}
Then, \eqref{eq: besov gradient} follows by taking the $p$ root. Similarly, to prove \eqref{eq: besov quadratic}, if $p \geq 2$ we write 
\begin{align}
    (f g)_\e(x) - f_\e(x) g_\e(x) & = \int_{B_1} (f(x-\e y) - f(x)) (g(x-\e y) - g(x)) \rho(y) \, dy
    \\ & - \left( \int_{B_1} (f(x-\e y) - f(x) ) \rho(y) \, dy \right) \left( \int_{B_1} (g(x-\e y) - g(x)) \rho(y)\, dy \right) 
    \\ & = I_\e(x) +II_\e(x).  
\end{align}
Then, by Jensen's and H\"older's inequality, we infer that 
\begin{align}
    \int_U \abs{ I_\e(x)}^{\sfrac{p}{2}} \, dx & \lesssim \int_{B_1} \int_U \abs{f(x- \e y)- f(x)}^{\sfrac{p}{2}} \abs{g(x-\e y) - g(x)}^{\sfrac{p}{2}} \, dx \, \rho(y) \, dy 
    \\ & \hspace{1 cm} 
    \\ & \lesssim \int_{B_1} \left( \int_U \abs{f(x-\e y) - f(x)}^p \, dx \right)^{\sfrac{1}{2}} \left( \int_U \abs{g(x-\e y) - g(x)}^p \right)^{\sfrac{1}{2}} \rho(y) \, dy  
    \\ & \lesssim \e^{\theta p} \left[ \norm{f}_{B^\theta_{p, \infty}(U)} \norm{g}_{B^\theta_{p,\infty}(U)} \ell_{f,U}(\e) \ell_{g,U}(\e) \right]^{\sfrac{p}{2}}.   
\end{align}
Similarly, we estimate 
\begin{equation}
    \int_U \abs{II_\e(x)}^{\sfrac{p}{2}} \, dx \lesssim \e^{\theta p} \left[ \norm{f}_{B^\theta_{p, \infty}(U)} \norm{g}_{B^\theta_{p,\infty}(U)} \ell_{f,U}(\e) \ell_{g,U}(\e) \right]^{\sfrac{p}{2}}.
\end{equation}
Then, \eqref{eq: besov gradient} follows by taking the $\sfrac{p}{2}$ root. 
\end{proof}

\subsection{Boundary trace} \label{ss: boundary trace} We recall the distributional notion of normal trace of a measure divergence vector field. Given an open set $\Omega \subset \R^d$, $p \in [1, +\infty]$ and $u \in L^p(\Omega)$ a vector field whose divergence is a Radon measure $\lambda$ on $\Omega$, the \emph{outer distributional normal trace} $\Tr_n(u, \partial \Omega)$ is defined in $\mathcal{D}'(\R^d)$ by 
\begin{equation}
    \langle \Tr_n(u, \partial \Omega); \phi \rangle := \int_{\Omega} \phi \, d \lambda + \int_{\Omega} u \cdot \nabla \phi \, dx \qquad \forall \phi \in C^\infty_c(\R^d). 
\end{equation}

As described in \cite{ACM05}, it is known that if $u \in L^\infty(\Omega)$ and $u$ has measure divergence, then the outer distributional normal trace is represented by a function in $L^\infty(\partial \Omega; \mathcal{H}^{d-1})$. Since this notion of trace is too weak to be handled in many situations, we recall the definition of the \emph{(normal) Lebesgue boundary trace} and some basic properties studied by the first author in \cites{DRINV23, CDRIN24}. This notion of trace is appropriate to deal with nonlinear problems and it lies between the distributional one for measure-divergence vector fields \cite{ACM05} and the strong one for BV functions \cite{AFP00}. 

\begin{Def} \label{def:normal}
Let $\Omega \subset \R^d$ be a bounded open set with Lipschitz boundary and let $u \in L^\infty (\Omega)$. We say that $u$ admits \emph{full Lebesgue trace} $u_{\partial \Omega} \in L^\infty(\partial \Omega; \R^d)$ on $\partial \Omega$ if for any sequence $r_k \to 0$ it holds that 
\begin{equation}
    \lim_{r_k \to 0} \fint_{B_{r_k}(x) \cap \Omega} \abs{u(y) - u_{\partial \Omega}(x)}\, dy = 0 \qquad \text{for $\mathcal{H}^{d-1}$-a.e. } x \in \partial \Omega. 
\end{equation}
If $u \in L^\infty(\Omega; \R^d)$, we say that $u$ admits \emph{outer normal Lebesgue trace} $u_n \in L^\infty(\partial \Omega)$ on $\partial \Omega$ if for any sequence $r_k \to 0 $ it holds that
\begin{equation}
    \lim_{r_k \to 0} \fint_{ B_{r_k} (x) \cap \Omega} \abs{ (u \cdot \nabla d_{\partial \Omega} ) (y) + u_n(x)} \, dy = 0  \qquad \text{for $\mathcal{H}^{d-1}$-a.e. } x \in \partial \Omega. 
\end{equation}
\end{Def}

Let $\Omega \subset \R^d$ be a bounded open set with Lipschitz boundary and let $n: \partial \Omega \to \mathbb{S}^{d-1}$ be the outer unit normal. Some remarks are in order. 

\begin{enumerate}
    \item [(i)] By \cite{DRINV23}*{Theorem 2.4} for $\mathcal{H}^{d-1}$-a.e. $x \in \partial \Omega$ we have that 
\begin{equation}
    \lim_{r \to 0} \fint_{B_r(x)} \abs{\nabla d_{\partial \Omega}(y) + n(x)} \, dy = 0. 
\end{equation}
Hence, if $u$ has a full trace $u_{\partial \Omega}$ in the sense of \cref{def:normal}, then $u$ has outer normal Lebesgue trace and it holds that $u_n = u_{\partial \Omega} \cdot n$.  
\item [(ii)] If $u$ has full trace (normal Lebesgue trace) $0$ at $\partial \Omega$, then $gu$ has full Lebesgue trace (respectively, normal Lebesgue trace) $0$ at $\partial \Omega$ for any $g \in L^\infty(\Omega)$.
\item [(iii)] If $f,g$ are bounded scalar functions with full trace at $\partial \Omega$, then $fg$ has full Lebesgue trace and it holds $(fg)_{\partial \Omega} = f_{\partial \Omega} g_{\partial \Omega}$. 
\item [(iv)] If $u$ is a bounded vector field with normal Lebesgue trace at $\partial \Omega$ and $f$ is a bounded scalar function with full trace $f_{\partial \Omega}$ at $\partial \Omega$, then $fu$ has normal Lebesgue trace and $(fu)_n = f_{\partial \Omega} u_n$. 
\end{enumerate}

We recall one of the main properties of the normal Lebesgue trace \cite{CDRIN24}*{Corollary 3.3}.
     
\begin{prop} \label{prop:boundary}
    Let $\Omega \subset \R^d$ be a bounded open set with Lipschitz boundary and let $u \in L^\infty(\Omega)$ be a vector field with outer normal Lebesgue trace $u_n$ according to \cref{def:normal}. Setting $\chi_r (x) : = \min (r^{-1}\text{dist}_{\partial \Omega} (x), 1 )$, it holds that   
    $$ \lim_{r \to 0 } \int_{\Omega} \phi U \cdot \nabla \chi_r \, dx = - \int_{\partial \Omega} \phi U_n \, d \mathcal{H}^{d-1} \qquad \forall \phi \in C^\infty_c(\R^d). $$
\end{prop}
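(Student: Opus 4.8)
The plan is to rewrite the left‑hand side as a rescaled integral over the thin boundary layer and then identify the limit as a ``differentiation at the boundary'' statement governed by \cref{def:normal}. Since $d_{\partial\Omega}$ is $1$-Lipschitz and the distance function to the closed set $\partial\Omega$ satisfies the eikonal identity $\abs{\nabla d_{\partial\Omega}}=1$ at $\mathcal{L}^d$-a.e.\ point of $\{d_{\partial\Omega}>0\}$, the chain rule gives $\nabla\chi_r=r^{-1}\mathbf{1}_{\Omega_r}\nabla d_{\partial\Omega}$ almost everywhere, where $\Omega_r:=\{\,0<d_{\partial\Omega}<r\,\}$; hence
\[
    \int_\Omega \phi\, u\cdot\nabla\chi_r\,dx \;=\; \frac1r\int_{\Omega_r}\phi\,(u\cdot\nabla d_{\partial\Omega})\,dx .
\]
By \cref{def:normal} the bounded function $g:=u\cdot\nabla d_{\partial\Omega}$ has Lebesgue boundary value $g^\flat:=-u_n$ at $\mathcal{H}^{d-1}$-a.e.\ point of $\partial\Omega$, so it suffices to prove the following abstract fact: \emph{if $g\in L^\infty(\Omega)$ and $g^\flat\in L^\infty(\partial\Omega)$ satisfy $\fint_{B_\rho(x)\cap\Omega}\abs{g-g^\flat(x)}\,dy\to0$ as $\rho\to0$ for $\mathcal{H}^{d-1}$-a.e.\ $x\in\partial\Omega$, then $r^{-1}\int_{\Omega_r}\phi\, g\,dx\to\int_{\partial\Omega}\phi\, g^\flat\,d\mathcal{H}^{d-1}$ for every $\phi\in C^\infty_c(\R^d)$}; applying this with $g^\flat=-u_n$ yields the claim.

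I would prove the abstract fact by a standard Vitali covering argument, whose only non-elementary ingredient is the asymptotics of the layer, namely $r^{-1}\mathcal{L}^d\llcorner\Omega_r \weakstar \mathcal{H}^{d-1}\llcorner\partial\Omega$ and, quantitatively, $r^{-1}\mathcal{L}^d(\Omega_r\cap B)\to\mathcal{H}^{d-1}(\partial\Omega\cap B)$ for balls $B$ centred on $\partial\Omega$ with $\mathcal{H}^{d-1}(\partial\Omega\cap\partial B)=0$. Granting this, fix $\eta>0$; since $\partial\Omega$ is Lipschitz, hence $(d-1)$-rectifiable, the points $x\in\partial\Omega$ at which the Lebesgue value of $g$ exists, the identity $\fint_{B_\rho(x)}\abs{\nabla d_{\partial\Omega}+n(x)}\,dy\to0$ of remark (i) holds, and $\partial\Omega$ is approximately flat at small scales, form a set of full $\mathcal{H}^{d-1}$ measure. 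Using the Vitali covering theorem (admissible since $\mathcal{H}^{d-1}\llcorner\partial\Omega$ is Ahlfors regular) one selects countably many essentially disjoint balls $B_i=B_{\rho_i}(x_i)$ covering $\mathcal{H}^{d-1}$-almost all of $\partial\Omega$, with each $x_i$ such a good point and each $\rho_i$ so small that $\fint_{B_i\cap\Omega}\abs{g-g^\flat(x_i)}<\eta$, that $\phi$ oscillates by less than $\eta$ on $B_i$, and that $\sum_i\mathcal{H}^{d-1}(\partial\Omega\cap B_i)\le\mathcal{H}^{d-1}(\partial\Omega)+\eta$. On each $B_i$,
\[
    \frac1r\int_{\Omega_r\cap B_i}\phi\, g\,dx \;=\; \phi(x_i)\,g^\flat(x_i)\,\frac{\mathcal{L}^d(\Omega_r\cap B_i)}{r}\;+\;O\!\Big(\eta\,\frac{\mathcal{L}^d(\Omega_r\cap B_i)}{r}\Big),
\]
which by the layer asymptotics converges, as $r\to0$, to within $O\big(\eta\,\mathcal{H}^{d-1}(\partial\Omega\cap B_i)\big)$ of $\int_{\partial\Omega\cap B_i}\phi\, g^\flat\,d\mathcal{H}^{d-1}$. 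Since the layer concentrates on $\partial\Omega$, the part of $\Omega_r$ not accounted for by the $B_i$ contributes only $O(\eta)$ in the limit; summing over $i$, passing to $r\to0$ and then to $\eta\to0$ proves the abstract fact.

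The main obstacle is the layer asymptotics for a boundary that is merely Lipschitz, where $d_{\partial\Omega}$ need not be differentiable and the level sets $\{d_{\partial\Omega}=s\}$ need not be graphs. The route I would take is the coarea formula for $d_{\partial\Omega}$, which (using $\abs{\nabla d_{\partial\Omega}}=1$ a.e.) yields $\mathcal{L}^d(\Omega_r\cap B)=\int_0^r\mathcal{H}^{d-1}(\{d_{\partial\Omega}=s\}\cap B)\,ds$, together with the convergence $\mathcal{H}^{d-1}(\{d_{\partial\Omega}=s\}\cap B)\to\mathcal{H}^{d-1}(\partial\Omega\cap B)$ as $s\to0^+$; after localising to a boundary chart in which $\partial\Omega$ is a Lipschitz graph, the latter follows from the $(d-1)$-rectifiability of $\partial\Omega$ via a blow-up at $\mathcal{H}^{d-1}$-a.e.\ boundary point. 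Since \cref{prop:boundary} is exactly \cite{CDRIN24}*{Corollary 3.3}, one may instead invoke directly the analysis of the normal Lebesgue boundary trace carried out in \cites{DRINV23, CDRIN24}.
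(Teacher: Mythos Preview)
The paper does not supply its own proof of this proposition: it is stated as a recall of \cite{CDRIN24}*{Corollary 3.3}, and your closing sentence correctly identifies this. In that sense your proposal lands exactly where the paper does.

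Your independent sketch, however, has a gap at the Vitali step. From the single‑scale hypothesis $\fint_{B_i\cap\Omega}\abs{g-g^\flat(x_i)}<\eta$ you assert
\[
\frac{1}{r}\int_{\Omega_r\cap B_i}\phi\, g\,dx \;=\; \phi(x_i)\,g^\flat(x_i)\,\frac{\mathcal{L}^d(\Omega_r\cap B_i)}{r}\;+\;O\!\Big(\eta\,\frac{\mathcal{L}^d(\Omega_r\cap B_i)}{r}\Big),
\]
which would require $\fint_{\Omega_r\cap B_i}\abs{g-g^\flat(x_i)}\lesssim\eta$. But for $r\ll\rho_i$ the slab $\Omega_r\cap B_i$ has measure $\sim r\rho_i^{d-1}$, vastly smaller than $\abs{B_i\cap\Omega}\sim\rho_i^d$, and smallness of the average on the full half‑ball says nothing about the average on this thin subset: all of the ``bad'' mass of $g-g^\flat(x_i)$ in $B_i\cap\Omega$ could sit in $\Omega_r\cap B_i$. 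Strengthening to $\fint_{B_\rho(x_i)\cap\Omega}\abs{g-g^\flat(x_i)}<\eta$ for every $\rho\le\rho_i$ does not help either, since those concentric half‑balls still do not resolve the slab geometry away from $x_i$. A workable repair is to let the covering depend on $r$ (balls of radius comparable to $r$, so that $\Omega_r\cap B_{Cr}(x)$ and $B_{Cr}(x)\cap\Omega$ have comparable measure) together with a maximal‑function/Egorov argument to handle the non‑uniformity in $x$ of the Lebesgue convergence; alternatively, straighten $\partial\Omega$ locally to a Lipschitz graph and argue by Fubini in the normal variable. This is precisely the analysis carried out in \cites{DRINV23, CDRIN24}, so your final invocation of those references is the right conclusion.
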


\subsection{Explicit formula for the vorticity}

For the reader convenience, we discuss the proof of the Cauchy formula for the vorticity of a smooth Euler flow \eqref{eq: cauchy formula}. 

\begin{lem} \label{lem:computation-curl}
Let $\Omega \subset \R^3$ be a smooth bounded open set open bounded and let $u \in C^\infty ([0,T] \times \overline{\Omega} ; \R^3)$ be a smooth solution to \eqref{Euler} on $\Omega$ with the boundary condition $u(t) \cdot n \equiv 0$ at $\partial \Omega$ for any $t \in (0,T)$ and smooth initial datum $u_0$. Letting $X_t$ be the flow map associated to the velocity field $u$ at time $t$, then $\omega = \curl (u)$ satisfies
\begin{equation} \label{eq: formula vorticity}
    \omega (x,t ) = [ \nabla X_t \, \omega_0 ] (X^{-1}_t (x)), 
\end{equation}

\end{lem}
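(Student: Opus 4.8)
The plan is to establish the Cauchy formula \eqref{eq: formula vorticity} by showing that both sides solve the same transport-type ODE along Lagrangian trajectories. I would begin by recalling the vorticity equation \eqref{euler vorticity}, which for a smooth solution follows by taking the curl of the momentum equation in \eqref{Euler} and using the vector identity $\divergence(u\otimes u)=(u\cdot\nabla)u+\tfrac12\nabla|u|^2$ together with $\curl\nabla(\cdot)=0$. Thus $\omega$ satisfies $\partial_t\omega+(u\cdot\nabla)\omega=(\omega\cdot\nabla)u$, i.e.\ along the flow $\tfrac{d}{dt}\big(\omega(X_t(a),t)\big)=(\nabla u)(X_t(a),t)\,\omega(X_t(a),t)$.

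Next I would introduce the candidate $v(a,t):=[\nabla X_t(a)]\,\omega_0(a)$ and show it satisfies the same linear ODE in $t$ (for each fixed label $a$) as $w(a,t):=\omega(X_t(a),t)$. For this I differentiate the flow relation $\partial_t X_t(a)=u(X_t(a),t)$ in the space variable $a$ to get $\partial_t\nabla X_t(a)=(\nabla u)(X_t(a),t)\,\nabla X_t(a)$; multiplying by the constant (in $t$) vector $\omega_0(a)$ gives $\partial_t v(a,t)=(\nabla u)(X_t(a),t)\,v(a,t)$. Since $v(a,0)=\nabla X_0(a)\,\omega_0(a)=\omega_0(a)=w(a,0)$ and both $v$ and $w$ solve the same linear ODE with the same (smooth, hence Lipschitz in the relevant variables) coefficient matrix $(\nabla u)(X_t(a),t)$, uniqueness for linear ODEs forces $v(a,t)=w(a,t)$ for all $t\in[0,T]$. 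Evaluating at $a=X_t^{-1}(x)$ yields $\omega(x,t)=[\nabla X_t\,\omega_0](X_t^{-1}(x))$, which is \eqref{eq: formula vorticity}. One should also note that $X_t$ is a diffeomorphism of $\overline\Omega$ onto itself: by the impermeability condition $u\cdot n\equiv 0$ on $\partial\Omega$, trajectories starting in $\overline\Omega$ remain in $\overline\Omega$, so $X_t^{-1}$ is well defined and smooth, with $\det\nabla X_t\equiv 1$ by incompressibility (the latter is not strictly needed here but confirms consistency).

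The only genuine point requiring a little care — and the main ``obstacle'' — is verifying the identity $\partial_t\nabla X_t=(\nabla u)(X_t,t)\nabla X_t$ rigorously: this is just interchanging $\partial_t$ and $\partial_a$ for the smooth flow map (justified since $u\in C^\infty([0,T]\times\overline\Omega)$ guarantees $X\in C^\infty$ jointly) and applying the chain rule, $\partial_{a_j}\big(u(X_t(a),t)\big)=\sum_k(\partial_k u)(X_t(a),t)\,\partial_{a_j}X_t^k(a)$. Everything else is a direct consequence of the uniqueness theorem for linear systems of ODEs. I would present the argument in this order: (1) derive the vorticity equation \eqref{euler vorticity}; (2) recall the flow map and its basic properties on $\overline\Omega$; (3) derive the evolution of $\nabla X_t$; (4) identify the common ODE satisfied by $\omega\circ X_t$ and $\nabla X_t\,\omega_0$ and invoke ODE uniqueness; (5) change variables back via $X_t^{-1}$.
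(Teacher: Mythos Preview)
Your proposal is correct and complete. It differs from the paper's proof in presentation rather than substance: both arguments verify that the right-hand side of \eqref{eq: formula vorticity} solves the vorticity equation \eqref{euler vorticity} with the correct initial datum and then invoke uniqueness, but they do so in different coordinate systems. You work in Lagrangian variables: for each fixed label $a$ you show that $t\mapsto \omega(X_t(a),t)$ and $t\mapsto \nabla X_t(a)\,\omega_0(a)$ satisfy the same linear ODE $\dot{z}=(\nabla u)(X_t(a),t)z$ with the same initial value, and conclude by ODE uniqueness. The paper instead works in Eulerian variables: it tests the candidate $\omega$ against $\varphi\in C^\infty_c(\Omega)$, uses the measure-preserving change of variables $x=X_t(a)$ to transfer the integrals to the reference configuration, and differentiates in $t$ to recover the weak form of \eqref{euler vorticity}; uniqueness is then phrased via Gr\"onwall for the PDE. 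Your route is arguably more elementary, since it avoids the weak formulation and the change-of-variables bookkeeping; the paper's route, on the other hand, is closer in spirit to the distributional framework used elsewhere in the article. Both rely on exactly the same ingredient you flagged as the only delicate point, namely the evolution law $\partial_t\nabla X_t=(\nabla u)(X_t,t)\nabla X_t$.
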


\begin{proof}
By Gr\"onwall lemma, it is clear that given a smooth divergence free velocity field $u : [0,T] \times \overline{\Omega} \to \R^3$, there exists at most a unique smooth solution $\omega$ to \eqref{euler vorticity} with a smooth initial datum $\omega_0 = \curl (u_0)$. Then, it is enough to check that $\omega$ given by \eqref{eq: formula vorticity} satisfies \eqref{euler vorticity}. Since, $u$ is divergence free and it satisfies $u(t) \cdot n \equiv 0$ at $\partial \Omega$ for any $t \in (0,T)$, then $X_t$ is measure preserving, $X_t (\Omega) = \Omega$ and
$$\int_{\Omega} \varphi (x) \omega (x,t) \, dx = \int_{\Omega} \varphi (X_t (x)) \nabla X_t (x) \omega_0 (x) \, dx \qquad \forall \phi \in C^\infty_c(\Omega). $$
Therefore, for any $\varphi \in C^\infty_c (\Omega)$ we have
     \begin{align}
         \int_{\Omega} \phi (x) \partial_t \omega (x,t) \, dx &  = \frac{d}{dt} \int_{\Omega} \phi (X_t (x)) \nabla X_t (x) \omega_0 (x) \, dx 
         \\ & =  \int_{\Omega} (\nabla \phi (X_t ) \cdot \partial_t X_t)  \nabla X_t  \omega_0  \, dx  + \int_{\Omega} \phi (X_t ) (\nabla \partial_t X_t )  \omega_0  \, dx 
         \\ & =  \int_{\Omega} ( \nabla \phi (X_t ) \cdot u (X_t) ) \nabla X_t \omega_0 \, dx + \int_{\Omega} \phi (X_t) \nabla u (X_t) \nabla X_t  \omega_0 \,  dx
         \\ & = \int_{\Omega} (  \nabla \phi (x) \cdot u(x,t) ) \omega (x,t) \, dx + \int_{\Omega} \phi(x) \nabla u(x,t) \omega(x,t) \, dx
         \\
         & = \int_{\Omega} ( \omega(x,t) \cdot \nabla u (x,t) - u (x,t) \cdot \nabla \omega (x,t) ) \phi (x) \, dx,
      \end{align}
      from which we conclude the proof. 
\end{proof}

\section{Proofs}

\subsection{Local helicity balance}
We discuss the proof of \cref{th:main-helicity}. 

\begin{proof}[Proof of \cref{th:main-helicity}] For the reader's convenience, we split the proof in several steps. 

\textsc{\underline{Step 1}}: To begin with, we discuss the regularity of the pressure. We prove that $p \in L^1_{\rm loc}((0,T); H^{\sfrac{1}{2}}_{\rm loc}(\Omega)) \cap L^q_{\rm loc}(\Omega \times (0,T))$. Fix bounded open sets $U \subset \joinrel \subset V \subset \joinrel \subset Z \subset \joinrel \subset \Omega, I \subset \joinrel \subset (0,T)$ and $\chi \in C^\infty_c(Z; [0,1])$ such that $\chi \equiv 1$ on $V$. We neglect multiplicative constants depending only on $U, V, Z$. Let $\tilde{u} = u \chi \in L^\infty(I \times \R^3) \cap L^2(I; H^{\sfrac{1}{2}}(\R^3))$ and let $\tilde{p}(t)$ be the unique solution to the elliptic problem 
\begin{equation}
    (-\Delta) \tilde{p}(t) = \divergence \divergence (\tilde{u}(t) \otimes \tilde{u}(t)) \qquad \text{in } \R^3, \label{eq: pressure equation}
\end{equation}
decaying at infinity. By standard Calderón--Zygmund estimates, it turns out that $\tilde{p}(t)$ satisfies 
\begin{equation} \label{eq: pressure 1}
    \norm{\tilde{p}(t)}_{H^{\sfrac{1}{2}}(\R^3)} \lesssim \norm{\tilde{u}(t) \otimes \tilde{u}(t)}_{H^{\sfrac{1}{2}}(\R^3)} \lesssim \norm{u(t)}_{H^{\sfrac{1}{2}}(Z)} \norm{u(t)}_{L^\infty(Z)},   
\end{equation} 
where we have used \cref{l: extension lemma} and \cref{H12 is algebra}. Similarly, for any $q \in (1, +\infty)$ we have that 
\begin{equation} \label{eq: pressure 2}
    \norm{\tilde{p}(t)}_{L^q(\R^3)} \lesssim \norm{\tilde{u}(t)}_{L^{2q}(\R^3)}^2 \lesssim \norm{u(t)}_{L^\infty(Z)}^2, 
\end{equation}
where the implicit constant depends also on $q$. Then, it is immediate to check that $p(t)- \tilde{p}(t)$ is harmonic in $V$ and by the mean value property, for any $k \geq 0$, we can estimate 
\begin{equation}
    \norm{p(t)- \tilde{p}(t)}_{C^k(U)} \lesssim \norm{p(t) - \tilde{p}(t)}_{L^1(V)} \lesssim \norm{p(t)}_{L^1(V)} + \norm{\tilde{p}(t)}_{L^1(V)} 
\end{equation}
where the implicit constant depends also on $k$. Then, by \eqref{eq: pressure 1} and \eqref{eq: pressure 2} we compute 
\begin{align}
    \norm{p}_{L^1(I; H^{\sfrac{1}{2}}(U)) } & \lesssim \norm{\tilde{p} - p}_{L^1(I; H^{\sfrac{1}{2}}(U))} + \norm{\tilde{p}}_{L^1(I;H^{\sfrac{1}{2}}(U))}
    \\ & \lesssim \norm{p}_{L^1(V \times I)} + \norm{\tilde{p}}_{L^1(V \times I)} + \norm{u}_{L^1(I;H^{\sfrac{1}{2}}(Z))} \norm{u}_{L^\infty( Z \times I)}
    \\ & \lesssim \norm{p}_{L^1(V \times I)} + \norm{u}_{L^\infty(Z\times I)}^2 + \norm{u}_{L^1(I;H^{\sfrac{1}{2}}(Z))} \norm{u}_{L^\infty( Z \times I)}. 
\end{align}
With the same argument, we prove that $p \in L^q_{\rm loc}(\Omega \times (0,T))$ for any $q < +\infty$. 

\textsc{\underline{Step 2}}: From now on, we fix bounded open sets $U, V, I$ as above and we set $\e \leq \e_0 = \text{dist}(U, V^c)$. Letting $u_\e, \omega_\e$ be the spatial mollification of $u, \omega$ in $U$, we check that $u_\e, \omega_\e \in W^{1,1}(U \times I)$ so that all the computations in the following steps are fully justified. Mollifying \eqref{Euler} in $U$ with respect to the spatial variable and taking the curl, we get 
\begin{equation} \label{eq: mollified equation 1}
    \partial_t u_\e + \divergence(u \otimes u)_\e + \nabla p_\e = 0, \qquad \partial_t \omega_\e + \curl(\divergence(u \otimes u)_\e) = 0 \qquad \text{in } U. 
\end{equation} 
From now on, we neglect multiplicative constants depending on the convolution kernel and on $U,V, I, k$. We estimate
$$\norm{\nabla^k u_\e}_{L^\infty(U\times I)} \lesssim \e^{-k} \norm{u}_{L^\infty(V \times I)}, \qquad \norm{\nabla^k(u \otimes u)_\e }_{L^\infty(U\times I)} \lesssim \e^{-k} \norm{u}_{L^\infty(V \times I)}^2, $$
$$\norm{\nabla p_\e}_{L^1(U \times I)} \lesssim \e^{-1} \norm{p}_{L^1(V \times I)}. $$
Thus, recalling \eqref{eq: mollified equation 1}, we infer that $\partial_t u_\e, \partial_t \omega_\e \in L^1(U \times I)$. 

\textsc{\underline{Step 3}}: The computations below hold in $U \times I$. Letting $R_\e = u_\e \otimes u_\e -(u\otimes u)_\e$, we write 
\begin{equation} \label{eq: mollified equation 2}
    \partial_t u_\e + \divergence(u_\e \otimes u_\e) + \nabla p_\e = \divergence(R_\e), 
\end{equation}
\begin{equation} \label{eq: mollified equation 3}
    \partial_t \omega_\e + (u_\e \cdot \nabla ) \omega_\e - (\omega_\e \cdot \nabla) u_\e = \curl \divergence(R_\e). 
\end{equation} 
Then, multiplying \eqref{eq: mollified equation 2} by $\omega_\e$ and \eqref{eq: mollified equation 3} by $u_\e$ and after some standard manipulations, we find  
\begin{equation}
    \begin{split}
        \partial_t(u_\e \cdot \omega_\e) + \divergence \bigg(u_\e (u_\e \cdot \omega_\e) + \bigg(p_\e - \frac{\abs{u_\e}^2}{2} \bigg) \omega_\e \bigg) & = \omega_\e \cdot \divergence R_\e + u_\e \cdot \curl \divergence(R_\e) 
        \\ & = \divergence (\omega_\e R_\e) - \nabla \omega_\e \colon R_\e + u_\e\cdot \curl \divergence R_\e.
    \end{split}
\end{equation}
Using the Levi--Civita notation, we check that 
\begin{equation}
    \begin{split}
        u_\e \cdot \curl \divergence R_\e & = u_\e^i \e_{i,j,k} \partial_j [\divergence R_\e]^k = \partial_j (u_\e^i \e_{i,j,k} [\divergence R_\e]^k) - \e_{i,j,k} \partial_j u_\e^i [\divergence R_\e]^k 
        \\ & = - \partial_j (\e_{j,i,k} u_\e^i [\divergence R_\e]^k) + \e_{k,j,i} \partial_j u_\e^i [\divergence R_\e]^k 
        \\ & = - \divergence (u_\e \times \divergence R_\e) + \curl u_\e \cdot \divergence R_\e. 
    \end{split}
\end{equation} 
We recall that 
$$\curl u_\e \cdot \divergence R_\e = \divergence(\omega_\e R_\e) - \nabla \omega_\e : R_\e. $$
Moreover, we have  
\begin{equation}
    \begin{split}
        \divergence (u_\e \times \divergence R_\e) & = \partial_i \e_{i,j,k} u_\e^j [\divergence R_\e]^k  = \partial_i ( \e_{i,j,k} \partial_l (u_\e^j R_\e^{k,l}) - \e_{i,j,k} (\partial_l u_\e^j R_\e^{k,l})) 
        \\ & = \partial_i ( \partial_l [u_\e \times R_\e^{\cdot, l}]^i ) - \partial_i (\partial_l u_\e \times R_\e^{\cdot, l})^i = \partial_l \divergence(u_\e \times R_\e^{\cdot, l}) - \divergence(\partial_l u_\e \times R_\e^{\cdot, l}),  
    \end{split}
\end{equation}
where we denote by $R^{\cdot, l}_\e$ the $l$-th column of the matrix $R_\e$. To summarize, we have 
\begin{equation} \label{eq:main-equation}
    \begin{split}
        \partial_t(u_\e \cdot \omega_\e) & + \divergence \bigg(u_\e (u_\e \cdot \omega_\e) + \bigg(p_\e - \frac{\abs{u_\e}^2}{2} \bigg)\omega_\e \bigg) = \omega_\e \cdot \divergence R_\e + u_\e \cdot \curl \divergence(R_\e) 
        \\ & = 2 \divergence(\omega_\e R_\e) - 2 \nabla \omega_\e \colon R_\e - \divergence (u \times \divergence R) 
        \\ & = 2 \divergence(\omega_\e R_\e) - 2 \nabla \omega_\e \colon R_\e + \divergence (\partial_l u \times R_\e^{\cdot, l}) - \divergence ( \partial_l (u_\e \times R_\e^{\cdot, l})) 
        \\ & = - 2 \nabla \omega_\e \colon R_\e + \divergence(2 \omega_\e R_\e + \partial_l u_\e \times R_\e^{\cdot, l} - \partial_l (u_\e \times R_\e^{\cdot, l}) ).  
    \end{split}
\end{equation}

\textsc{\underline{Step 4}}: We check that all the terms in the divergence at the right hand side go to $0$ in the sense of distributions. Fix a time slice $t \in [0,T]$. To begin, we prove that $R_\e(t) \to 0$ in $H^{\sfrac{1}{2}}_{\rm loc}(\Omega)$. By \cref{convergence of the squares} it holds that $u_\e(t) \otimes u_\e(t) \to u(t) \otimes u(t)$ in $H^{\sfrac{1}{2}}_{\rm loc}(\Omega)$ and, by \cref{l: approx by convolution} and \cref{H12 is algebra}, the same holds for $(u(t) \otimes u(t))_\e$. Thus, $R_\e(t) \to 0$ in $H^{\sfrac{1}{2}}_{\rm loc}(\Omega)$. By \cref{l: convergence in H^-1/2} we infer that $\partial_l u_\e^i(t) R_\e^{j,k}(t) \to 0$ in $\mathcal{D}'(\Omega)$, for any $i,j,k = 1,2,3$. Moreover, given a test function $\phi \in C^\infty_c(U \times I)$, take an open set $W$ such that $U \subset \joinrel \subset W \subset \joinrel \subset V$. Then, by \cref{l: approx by convolution}, \cref{l: convergence in H^-1/2}, \cref{H12 is algebra} and \cref{convergence of the squares} , for $\e \leq \e_0 = \min \{ \text{dist}(U,W^c), \text{dist}(W, V^c)\}$, it holds that 
\begin{align}
    \abs{\langle \partial_i u^l_\e(t) , R^{j,k}_\e(t) \phi(t) \rangle } & \lesssim \norm{ u_\e}_{H^{\sfrac{1}{2}  }(W)} \norm{R_\e(t) \phi(t)}_{H^{\sfrac{1}{2}  }(W)} 
    \\ & \lesssim \norm{u}_{H^{\sfrac{1}{2}  }(V)} \norm{u_\e(t)}_{H^{\sfrac{1}{2}  }(W)} \norm{u_\e(t)}_{L^\infty(W)} 
    \\ & \lesssim \norm{u(t)}_{H^{\sfrac{1}{2}  }(V)}^2 \norm{u(t)}_{L^\infty(V)} = : g(t), 
\end{align}
where the implicit constant depends also on $\phi, W$. Since $u \in L^2(I; H^{\sfrac{1}{2}}(V) )\cap L^\infty(V \times I)$, then we infer that $g \in L^1(I)$. Hence, we conclude that $ \partial_l u_\e^i R_\e^{j,k} \to 0$ in $\D'( \Omega \times (0,T))$. This argument proves that 
$$\divergence(2 \omega_\e R_\e + \partial_lu \times R_\e^{\cdot, l} ) \to 0 \text{ in } \D'(\Omega \times (0,T)). $$ 
Similarly, it can be checked that 
$$\divergence(\partial_l(u_\e \times R_\e^{\cdot, l})) \to 0 \text{ in } \D'(\Omega \times (0,T)), $$
thus the right hand side in \eqref{eq:main-equation} vanishes in the sense of distributions as $\e \to 0$. 

\textsc{\underline{Step 5}}: We consider the left hand side in \eqref{eq:main-equation}. By the same argument of the previous step, we deduce that 
\begin{equation}
    \partial_t (u_\e \cdot \omega_\e) + \divergence\bigg(u_\e (u_\e \cdot \omega_\e) - \frac{\abs{u_\e}^2}{2} \omega_\e\bigg)  \to \partial_t (u \cdot \omega) + \divergence\bigg(u (u \cdot \omega) - \frac{\abs{u}^2}{2} \omega\bigg) \qquad \text{in } \D'(\T^3 \times (0,T)). 
\end{equation}
To conclude, by \textsc{\underline{Step 1}}, it holds that $p \in L^1_{\rm loc}((0,T); H^{\sfrac{1}{2}}_{\rm loc}(\Omega)) $. Therefore, with the same argument of the previous step, we obtain that 
$$\divergence (p_\e \omega_\e) \to \divergence(p \omega) \qquad \text{in } \D'(\Omega \times (0,T)). $$
Finally, the term $2\nabla \omega_\e : R_\e$ (which a priori is not under control) defines a distribution in the limit by 
\begin{equation}
    \partial_t (u \cdot \omega) + \divergence\bigg(u(u \cdot \omega) + \bigg(p -\frac{\abs{u}^2}{2}\bigg) \omega \bigg) = - D[u] = - \lim_{\e \to 0} 2 \nabla \omega_\e : R_\e \qquad \text{in } \mathcal{D}'(\Omega \times (0,T)). 
\end{equation}
\end{proof}

\begin{proof} [Proof of \cref{cor: conservation}]
Fix open sets $I \subset \joinrel \subset (0,T), U \subset \joinrel \subset V \subset \joinrel \subset \Omega$. For $\e \leq \min\{ \text{dist}(U, V^c)\}$, by \cref{l: commutator estimates} we get that 
\begin{equation}
    \norm{\nabla \omega_\e(t) : R_\e(t)}_{L^1(U)} \lesssim [u(t)]_{B^{\sfrac{2}{3}}_{3,\infty}(U)}^3 \ell_{u(t), U} (\e)^3.  
\end{equation}
Since $\ell_{u(t), U}(\e) \to 0$ as $\e \to 0$ for a.e. $t \in (0,T)$, we conclude that $\nabla \omega_\e : R_\e \to 0$ in $L^1_{\rm loc}(\Omega \times (0,T))$ by dominated convergence. 
\end{proof}

\subsection{Total helicity balance} \label{sec:global}

In this section we study the total helicity. We recall that, given a distribution $F$ on an open set $\Omega \subset \R^d$, the support of $F$ is the complement of the largest open set $A$ such that $F(\phi) = 0$ for any $\phi \in C^\infty_c(A)$. Consider a weak solution $u \in L^2((0,T); H^{\sfrac{1}{2}}(\T^3)) \cap L^\infty(\T^3 \times (0,T))$ to \eqref{Euler} and let $D[u]$ be the helicity distribution defined by \cref{th:main-helicity}. Under these assumptions, since $\T^3$ has no physical boundary, then we can define the total helicity 
$$H(t) : = \langle \omega(t), u(t)\rangle.$$
Integrating in space \eqref{local helicity rough},
 the time marginal of $D[u]$ is the distributional derivative of the total helicity $H$. Inspired by \cite{DRH22}*{Lemma 2.1} and \cite{LZ23}*{Theorem 1.1} we prove the following result. 

\begin{prop} \label{holder continuity vs supp dim}
Let $u \in  L^\infty ((0,T) ; B^\theta_{3, \infty})$ be a weak solution to \eqref{Euler} according to \cref{d: weak solution} with $\theta \in \left(\sfrac{1}{2} , \sfrac{2}{3}\right]$ such that $H' \neq 0$. Then 
$$ \text{dim}_{\mathcal{H}} (\text{Spt} (H')) \geq \frac{2 \theta -1 }{1 - \theta}.$$
\end{prop}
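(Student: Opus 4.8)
The plan is to combine the Besov regularity of the velocity field with the mollification formula for $D[u]$ from \cref{th:main-helicity} in order to bound $H'$ against the commutator term, and then to run a dimension-counting argument on the support of $H'$ à la Frisch/Onsager (as in \cite{DRH22}, \cite{LZ23}). First I would localize in time: since $H'$ is a distribution on $(0,T)$ whose support has some Hausdorff dimension, by definition of the support, for any open $J \subset \joinrel \subset (0,T)$ on which $H'$ does not vanish and any $\delta>0$ there is a point $t_0 \in \Spt(H') \cap J$; I will then build, around $t_0$, a family of test functions $\alpha_r \in C^\infty_c((0,T))$ concentrated at scale $r$ on which $\langle H', \alpha_r\rangle \neq 0$, and I will estimate $|\langle H', \alpha_r\rangle|$ from above using the structure of the equation. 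Concretely, integrating \eqref{local helicity rough} against $\alpha_r(t)$ (which kills the spatial divergence on $\T^3$) gives $\int H(t)\alpha_r'(t)\,dt = \langle D[u], \alpha_r\rangle$, and by \cref{th:main-helicity} together with \cref{l: commutator estimates} (applied with $p=3$, $\theta \in (\sfrac12,\sfrac23]$) we get $\|\nabla\omega_\e : R_\e\|_{L^1(\T^3)} \lesssim \e^{-1+\theta}\,\e^{2\theta}\,[u(t)]_{B^\theta_{3,\infty}}^3 = \e^{3\theta-1}[u(t)]_{B^\theta_{3,\infty}}^3$, which is summable/controlled uniformly in $\e$ only when $\theta\geq \sfrac13$ and in any case gives a uniform bound $|\langle D[u],\alpha_r\rangle| \lesssim \|\alpha_r\|_{L^1}\,\|u\|_{L^\infty_t B^\theta_{3,\infty}}^3$.

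The key quantitative input is a comparison between two scales: the temporal scale $r$ of the test function and the spatial mollification scale $\e$. The idea, following \cite{LZ23}, is that near a point $t_0$ where $H$ has a genuine jump-type singularity, one wants to show $H$ cannot be too regular; conversely, the a priori bound forces a relation of the form $|H(t)-H(t_0)| \lesssim |t-t_0|^{\beta}$ with $\beta = \frac{2\theta-1}{1-\theta}$ at points where the defect is small, and then the standard fact that the singular set of a function with such a modulus of continuity has Hausdorff dimension at most $1-\beta$ — wait, the inequality in the statement goes the other way, so more precisely: the assumption $H'\neq 0$ means $D[u]$ (viewed after integrating in space) is a nonzero distribution, and one shows that if $\dim_{\mathcal H}(\Spt(H'))$ were strictly less than $\frac{2\theta-1}{1-\theta}$ then a covering argument would force $\langle D[u],\alpha\rangle = 0$ for all $\alpha$, a contradiction. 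The mechanism: cover $\Spt(H')$ by intervals $\{I_j\}$ of lengths $r_j$ with $\sum r_j^{s} < \eta$ for $s$ slightly above the conjectured dimension; on each $I_j$ choose the mollification scale $\e_j \sim r_j^{1/(1-\theta)}$ (so that the two error mechanisms — the $O(\e)$ loss from $\partial_t$-type manipulations in time versus the $O(\e^{3\theta})$ spatial commutator gain — balance against the interval length), sum the contributions, and check the total tends to $0$.

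The main obstacle, and where the argument needs care, is making rigorous the "balancing of scales" on each covering interval: unlike the energy case where one tests directly against spatial cutoffs, here $D[u]$ is only defined as a weak limit of $2\nabla\omega_\e : R_\e$, so I need to interpolate between the uniform-in-$\e$ bound (valid for $\theta\geq\sfrac13$) and the genuine smallness of the commutator at scale $\e$, and then trade a power of $\e$ for a power of the time-interval length $r_j$ using the time-regularity of $u$ coming from the equation itself — i.e. I will need that $u\in L^\infty_t B^\theta_{3,\infty}$ plus \eqref{Euler} gives $\partial_t u$, and hence $\partial_t\omega_\e$, a controlled size so that $\omega_\e(t):R_\e(t)$ can be replaced by $\omega_\e(t_0):R_\e(t_0)$ up to an error $O(r_j/\e_j)$. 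Choosing $\e_j = r_j^{1/(1-\theta)}$ makes $r_j/\e_j \cdot \e_j^{3\theta} = r_j \e_j^{3\theta-1} = r_j^{1 + (3\theta-1)/(1-\theta)} = r_j^{(2\theta)/(1-\theta)}$ and the pure commutator term $\e_j^{3\theta-1} = r_j^{(3\theta-1)/(1-\theta)}$, whichever dominates being comparable to $r_j^{(2\theta-1)/(1-\theta)}$ after also accounting for the extra factor $r_j$ from $\|\alpha_{r_j}\|_{L^1}$; summing over $j$ and using $\sum_j r_j^{s}<\eta$ with $s > \frac{2\theta-1}{1-\theta}$ yields the contradiction. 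I would carry out the steps in the order: (1) reduce to bounding $\langle D[u],\alpha\rangle$ for $\alpha$ supported near a point of $\Spt(H')$; (2) establish the two-scale estimate on a single short interval via \cref{l: commutator estimates} and the time-regularity of $u$; (3) optimize $\e$ in terms of $r$; (4) run the Vitali-type covering/summation argument to conclude $\dim_{\mathcal H}(\Spt(H')) \geq \frac{2\theta-1}{1-\theta}$.
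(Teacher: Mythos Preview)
Your proposal has the right skeleton---contradiction via a covering of $\Spt(H')$ combined with a scale-dependent estimate---but the analytic core is misidentified and the exponent bookkeeping does not close. First, $\nabla\omega_\e$ carries \emph{two} derivatives of $u_\e$, so \cref{l: commutator estimates} yields $\|\nabla\omega_\e\colon R_\e\|_{L^1}\lesssim\e^{-2+\theta}\cdot\e^{2\theta}=\e^{3\theta-2}$, not $\e^{3\theta-1}$; this diverges as $\e\to0$ for $\theta<\sfrac23$, which is precisely why one cannot pass to the limit naively and must balance scales. Second, the balancing mechanism you describe---freezing the defect at a time $t_0$ and invoking time-regularity of $u$ from the equation to control $\omega_\e(t)\colon R_\e(t)-\omega_\e(t_0)\colon R_\e(t_0)$---is not the right one, and your own arithmetic reflects this: with $\e_j=r_j^{1/(1-\theta)}$ both of your displayed terms equal $r_j^{2\theta/(1-\theta)}$, which is \emph{not} $r_j^{(2\theta-1)/(1-\theta)}$, so the covering sum cannot be made small. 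The missing estimate is $|H_\e(t)-H(t)|\lesssim\e^{2\theta-1}$, which comes from the bilinear structure $H=\langle u,\omega\rangle$ and the spatial $B^\theta_{3,\infty}$ regularity alone, with no time-regularity needed; combining this with $|H_\e'(t)|\lesssim\e^{3\theta-2}$ and optimizing $\e\sim|t-s|^{1/(1-\theta)}$ is exactly what gives $H\in C^\sigma$ with $\sigma=(2\theta-1)/(1-\theta)$.

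The paper's proof is much shorter because it outsources this step entirely: it cites \cite{LZ23}*{Theorem~1.1} as a black box to obtain $H\in C^\sigma$, and then runs a two-line covering argument. Assuming $\dim_{\mathcal H}(\Spt(H'))<\sigma$, one covers $\Spt(H')$ by finitely many intervals $B_{r_i}(t_i)$ with $\sum_i r_i^\sigma<\e$; since $H$ is locally constant on the open set $(\Spt(H'))^c$ and globally $C^\sigma$, for every $t$ one has $|H(t)-H(0)|\le\sum_i\sup_{\tau,s\in B_{r_i}(t_i)}|H(\tau)-H(s)|\le[H]_{C^\sigma}\sum_i r_i^\sigma<\e[H]_{C^\sigma}$, forcing $H$ constant and contradicting $H'\neq0$. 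Your plan, if repaired with the correct approximation estimate above, would essentially amount to reproving \cite{LZ23}*{Theorem~1.1} en route.
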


\begin{proof}
Denote by $\sigma := \frac{2 \theta -1 }{1 - \theta} $ and suppose by contradiction that $\text{dim}_{\mathcal{H}} (\text{Spt} (H')) < \sigma$. Then, by the definition of Hausdorff measure, for any $\e >0$ and any $\delta >0$ there exists $ N(\varepsilon , \delta) >0$ and a finitely many balls $\{ B_{r_i}(t_i) \}_{i=1}^N $ with $r_i \leq \delta$ and 
$$ \text{Spt} (H') \subset  \bigcup_{i =1 }^N B_{r_i}(t_i), \qquad \sum_{i =1}^N r_i^{\sigma} < \varepsilon. $$
Then, we prove that $H' \equiv 0$, which is a contradiction. By \cite{LZ23}*{Theorem 1.1}, $H$ agrees almost everywhere with a $C^\sigma$ function. Then, still denoting by $H$ the continuous representative, for any $t \in (0,T)$ we have that 
\begin{align}
    \abs{ H(t) - H(0)} \leq \sum_{i =1}^N \sup_{\tau , s \in B_{r_i}(t_i) } \abs{H(\tau   ) - H(s) } \leq  \norm{H}_{C^\sigma } \sum_{i=1}^N r_i^\sigma < \e \norm{H}_{C^\sigma }.
\end{align}
In the first inequality we used that $H$ is locally constant in the open set $  (\Spt (H'))^c $,  which is such that $(\cup_{i =1 }^N B_{r_i}(t_i) )^c \subset (\Spt (H'))^c$, and $(\cup_{i =1 }^N B_{r_i}(t_i) )^c$ is a closed set. Since $\e >0$ is arbitrary we conclude that $H \equiv H(0)$. 
\end{proof}

The remaining goal of this section is to prove \cref{thm:conservation-bdd-domain}, i.e. to study the variation of the total helicity in terms of the boundary flux of the vorticity. In particular, if the vorticity happens to be tangent to the boundary, then the total helicity is conserved. We recall that the condition of vanishing initial vorticity at the boundary is preserved by smooth Euler flows (see \cref{lem:computation-curl}).

We recall some results needed in the proof of \cref{thm:conservation-bdd-domain}.

\begin{prop} [\cite{Wahl92}*{Theorem 3.2}] \label{prop:wahl}
Let $\Omega \subset \R^3$ be a simply connected smooth bounded domain and $ p \in (1,  \infty)$. Assume $u \in W^{1,p} (\Omega)$ has the impermeability condition in the sense of trace operator. There exists an implicit constant depending only on $\Omega$ such that 
$$ \| \nabla u \|_{L^p(\Omega)} \lesssim \| \curl (u) \|_{L^p(\Omega)} + \| \divergence (u) \|_{L^p(\Omega)}.$$
\end{prop}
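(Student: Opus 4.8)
Since Proposition~\ref{prop:wahl} is precisely \cite{Wahl92}*{Theorem 3.2}, no proof is strictly needed; we only sketch the standard strategy for completeness. The starting point is the Helmholtz--Weyl decomposition adapted to the impermeability condition. Given $u\in W^{1,p}(\Omega)$ with $u\cdot n=0$ on $\partial\Omega$, one first solves the Neumann problem $\Delta\phi=\divergence(u)$ in $\Omega$, $\partial_n\phi=0$ on $\partial\Omega$, which is solvable because $\int_\Omega\divergence(u)=\int_{\partial\Omega}u\cdot n=0$, and $L^p$ elliptic regularity for the Neumann Laplacian gives $\norm{\nabla^2\phi}_{L^p(\Omega)}\lesssim\norm{\divergence(u)}_{L^p(\Omega)}$. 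Replacing $u$ by $u-\nabla\phi$, which still satisfies $(u-\nabla\phi)\cdot n=0$ and has the same curl, we reduce to the case $\divergence(u)=0$.

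For a divergence-free field tangent to $\partial\Omega$ on the simply connected domain $\Omega$ one constructs a vector potential $A\in W^{2,p}(\Omega)$ with $u=\curl(A)$, $\divergence(A)=0$ in $\Omega$ and $A\times n=0$ on $\partial\Omega$; this is classical vector-potential theory on simply connected domains. Since $\divergence(A)=0$, the identity $-\Delta A=\curl\curl(A)-\nabla\divergence(A)$ yields the boundary value problem $-\Delta A=\curl(u)$ in $\Omega$ with $A\times n=0$ and $\divergence(A)=0$ on $\partial\Omega$. This problem is elliptic in the sense of Agmon--Douglis--Nirenberg, hence the a priori estimate $\norm{A}_{W^{2,p}(\Omega)}\lesssim\norm{\curl(u)}_{L^p(\Omega)}+\norm{A}_{L^p(\Omega)}$ holds.

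It remains to absorb the lower-order term $\norm{A}_{L^p(\Omega)}$, which is done by a compactness argument: if the claimed inequality failed, there would exist a sequence $u_k$ with $\norm{\nabla u_k}_{L^p(\Omega)}=1$ and $\norm{\curl(u_k)}_{L^p(\Omega)}+\norm{\divergence(u_k)}_{L^p(\Omega)}\to0$; after the first reduction and passing to the corresponding potentials $A_k$, which (once suitably normalized and using Poincaré-type inequalities together with the fact that $u\cdot n=0$ forbids nonzero constant tangent fields on a bounded domain) are bounded in $W^{2,p}(\Omega)$, the compact embedding $W^{2,p}(\Omega)\hookrightarrow W^{1,p}(\Omega)$ lets us extract a limit $u$ with $\curl(u)=\divergence(u)=0$ and $u\cdot n=0$, i.e.\ a harmonic field tangent to $\partial\Omega$. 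On a simply connected domain the space of such fields is trivial, so $u=0$; but the $W^{2,p}$ bound on the $A_k$ forces $\nabla u_k\to\nabla u$ strongly in $L^p(\Omega)$, contradicting $\norm{\nabla u_k}_{L^p(\Omega)}=1$. Combining the three steps,
\[
\norm{\nabla u}_{L^p(\Omega)}=\norm{\nabla\curl(A)}_{L^p(\Omega)}\lesssim\norm{\nabla^2 A}_{L^p(\Omega)}\lesssim\norm{\curl(u)}_{L^p(\Omega)}+\norm{\divergence(u)}_{L^p(\Omega)}.
\]

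The main technical obstacle lies in the second step: constructing the vector potential with the prescribed boundary behaviour on a merely simply connected (rather than convex or star-shaped) domain, and verifying that the vector Laplacian with the boundary conditions $A\times n=0$, $\divergence(A)=0$ satisfies the Lopatinski--Shapiro complementing condition, so that the $L^p$ Calderón--Zygmund theory applies. Both facts are classical but delicate; the details are in \cite{Wahl92}.
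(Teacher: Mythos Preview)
The paper does not prove this proposition at all: it is stated with a bracketed attribution to \cite{Wahl92}*{Theorem 3.2} and used as a black box in the proof of \cref{lemma:omega-smooth}. You correctly identify this and your sketch of the Helmholtz reduction, vector-potential construction, ADN ellipticity, and compactness absorption of the lower-order term is a faithful outline of the standard argument behind von Wahl's result; nothing further is required here.
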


\begin{lem} [\cite{CS13}*{Lemma 3.5}] \label{lemma:Lpdensity} 
    Let $\Omega \subset \R^3$ be a smooth domain and $p \in (1, \infty)$. Denote by 
    $$X(\Omega) : = \{ u \in L^p (\Omega) : \curl(u ) \in L^p(\Omega), \, \divergence(u) \in L^p(\Omega), \, u \cdot n =0 \text{ on } \partial \Omega \},$$
    where the condition $u\cdot n = 0$ on $\partial \Omega$ is defined in the sense of the distributional normal traces. Then, $W^{1,p}(\Omega) \cap X(\Omega)$ is dense in $X(\Omega)$ with respect to the norm
    $$\| u \|_{X} : = \|u \|_{L^p(\Omega)} + \| \curl(u) \|_{L^p(\Omega)} + \| \divergence (u) \|_{L^p(\Omega)}. $$
\end{lem}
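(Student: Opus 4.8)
The plan is to regularize $u$ at the level of its divergence and curl, where the constraints defining $X(\Omega)$ are stable, and then to transfer the gained regularity back to the vector field through the solution operators of elliptic problems, whose continuity in the graph norm $\norm{\cdot}_{X}$ is furnished by \cref{prop:wahl}.

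\emph{Reduction to divergence-free fields.} Given $u \in X(\Omega)$, set $\rho := \divergence(u) \in L^p(\Omega)$. Testing the distributional normal trace condition $u \cdot n = 0$ against a cutoff equal to $1$ on a neighborhood of $\overline{\Omega}$ yields $\int_\Omega \rho \, dx = 0$, which is exactly the compatibility condition for the Neumann problem $\Delta \phi = \rho$ in $\Omega$ with $\nabla\phi \cdot n = 0$ on $\partial\Omega$. By Calder\'on--Zygmund estimates on the smooth domain $\Omega$ this problem has a solution $\phi \in W^{2,p}(\Omega)$, so that $\nabla\phi \in W^{1,p}(\Omega) \cap X(\Omega)$ with $\divergence(\nabla\phi) = \rho$, $\curl(\nabla\phi) = 0$ and $\nabla\phi \cdot n = 0$. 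Setting $w := u - \nabla\phi$, we have $w \in X(\Omega)$ with $\divergence(w) = 0$ and $w$ tangent to $\partial\Omega$; since $\nabla\phi$ already belongs to $W^{1,p}(\Omega)\cap X(\Omega)$, it suffices to approximate $w$ in $X(\Omega)$ by elements of $W^{1,p}(\Omega)\cap X(\Omega)$.

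\emph{Regularization through the vorticity.} Let $\omega := \curl(w) = \curl(u) \in L^p(\Omega)$. Being a curl, $\omega$ is divergence-free with vanishing flux through each connected component of $\partial\Omega$; writing $\omega = \curl(B)$ for a vector potential $B$ and mollifying $B$ after extension to a neighborhood of $\overline{\Omega}$ (which is harmless, since no boundary condition is imposed on $\omega$) produces smooth divergence-free fields $\omega_k \to \omega$ in $L^p(\Omega)$ with the same vanishing fluxes. I would then solve the div--curl system $\curl(w_k) = \omega_k$, $\divergence(w_k) = 0$, $w_k\cdot n = 0$, whose solvability is guaranteed by these compatibility conditions, uniquely once the finite-dimensional and smooth space of harmonic fields $\set{h : \curl(h) = 0, \divergence(h) = 0, h\cdot n = 0}$ is gauged out (this space is trivial when $\Omega$ is simply connected). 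Elliptic regularity for this Hodge-type system gives $w_k \in W^{1,p}(\Omega)\cap X(\Omega)$, and applying \cref{prop:wahl} (together with Poincar\'e's inequality to recover the $L^p$ term from the gradient) to $w - w_k$ gives $\norm{w - w_k}_{W^{1,p}(\Omega)} \lesssim \norm{\omega - \omega_k}_{L^p(\Omega)} \to 0$. Since $\divergence(w_k) = 0 = \divergence(w)$ and $\curl(w_k) = \omega_k \to \omega = \curl(w)$ in $L^p(\Omega)$, this yields $w_k \to w$ in $X(\Omega)$; adding back $\nabla\phi$ produces the desired approximation of $u$.

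The main obstacle is the simultaneous preservation of the boundary condition $u\cdot n = 0$ and of the constraints $\divergence = 0$, $\curl = \omega$ under regularization, since a direct mollification of $w$ destroys all three near $\partial\Omega$. The point of the argument is to regularize the \emph{data} $\omega$, where divergence-freeness and the flux conditions are stable under mollification of a potential, and to recover $w_k$ by \emph{solving} the div--curl system, so that the boundary condition is built into the solution rather than imposed on an approximation; the quantitative control ensuring convergence in the graph norm is exactly \cref{prop:wahl}. The only topological subtlety, the space of harmonic fields when $\Omega$ is multiply connected, is immaterial because these fields are smooth and finite-dimensional, hence already lie in $W^{1,p}(\Omega)\cap X(\Omega)$.
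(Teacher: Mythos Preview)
The paper does not prove this lemma; it is quoted verbatim from \cite{CS13} and used as a black box in the proof of \cref{lemma:omega-smooth}. There is therefore no proof in the paper to compare against, and I evaluate your argument on its own merits.

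Your strategy is reasonable, but the key step is circular. You apply \cref{prop:wahl} to $w - w_k$ to obtain $\norm{w - w_k}_{W^{1,p}} \lesssim \norm{\omega - \omega_k}_{L^p}$; however, \cref{prop:wahl} is stated as an \emph{a priori} estimate for vector fields that are already in $W^{1,p}(\Omega)$, and $w$ is not known to lie there. Indeed, upgrading an element of $X(\Omega)$ to $W^{1,p}$ is precisely what the combination of this density lemma with \cref{prop:wahl} accomplishes in the proof of \cref{lemma:omega-smooth}, so you cannot invoke that conclusion inside the proof of the lemma itself. The repair is standard: apply \cref{prop:wahl} to the differences $w_j - w_k$ (both genuinely in $W^{1,p}$), deduce that $(w_k)$ is Cauchy in $W^{1,p}(\Omega)$ with limit $\tilde w$, and then identify $\tilde w$ with $w$ via uniqueness for the div--curl system. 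For this last step you must arrange that the harmonic component of each $w_k$ agrees with that of $w$ (not merely that it is ``gauged out''), since otherwise $\tilde w - w$ can be a nontrivial harmonic field when $\Omega$ is multiply connected.

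A second, smaller issue: \cref{prop:wahl} as recorded in the paper assumes $\Omega$ simply connected, whereas \cref{lemma:Lpdensity} does not. To cover general smooth domains you would need the version of the div--curl estimate that carries the harmonic projection on the right-hand side; alternatively, you may restrict to simply connected $\Omega$, which is the only case the paper actually uses.
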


The regularity of the pressure in bounded domains is rather delicate. We mention \cites{DRLS23, DRLS24, BT22, BBT24} and the references therein for a detailed presentation on the topic. 

\begin{lem} \label{lemma:omega-smooth}
Let $\Omega \subset \R^3$ be a simply connected bounded open set with smooth boundary. Let $(u,p) \in L^\infty(\Omega \times (0,T)) $ be a weak solution to \eqref{Euler} according to \cref{d: weak solution} such that $\omega = \curl (u) \in L^\infty(\Omega \times (0,T))$. Assume that $p(t)$ has zero average for a.e. $t$. Then, it holds that $u,p \in L^\infty((0,T); C^\alpha(\Omega))$ for any $\alpha\in (0,1)$ and in particular  $u \in L^\infty((0,T); H^{\sfrac{1}{2}}(\Omega))$. 
\end{lem}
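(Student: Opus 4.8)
The plan is to bootstrap regularity from the velocity-vorticity elliptic system. Fix a time slice $t$ for which $u(\cdot,t), p(\cdot,t) \in L^\infty(\Omega)$ and $\omega(\cdot,t) \in L^\infty(\Omega)$; this holds for a.e. $t$. The first step is to observe that $u(\cdot,t)$ is a divergence-free $L^\infty$ vector field on $\Omega$ with $\curl u(\cdot,t) \in L^\infty(\Omega)$, and that the impermeability condition \eqref{eq: impermeability condition} gives $u(\cdot,t)\cdot n = 0$ in the sense of distributional normal traces. Hence $u(\cdot,t) \in X(\Omega)$ in the notation of Lemma \ref{lemma:Lpdensity} for every $p \in (1,\infty)$. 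Combining the density Lemma \ref{lemma:Lpdensity} with the a priori estimate of Proposition \ref{prop:wahl} (applied to the $W^{1,p}$-approximants and passing to the limit), we obtain $u(\cdot,t) \in W^{1,p}(\Omega)$ with
\begin{equation}
    \norm{u(\cdot,t)}_{W^{1,p}(\Omega)} \lesssim \norm{u(\cdot,t)}_{L^p(\Omega)} + \norm{\omega(\cdot,t)}_{L^p(\Omega)} \lesssim_{\Omega,p} \norm{u(\cdot,t)}_{L^\infty(\Omega)} + \norm{\omega(\cdot,t)}_{L^\infty(\Omega)}
\end{equation}
for every $p \in (1,\infty)$. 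By the Morrey embedding $W^{1,p}(\Omega) \hookrightarrow C^{1-3/p}(\overline{\Omega})$ on the smooth bounded domain $\Omega$, choosing $p$ large shows $u(\cdot,t) \in C^\alpha(\overline{\Omega})$ for every $\alpha \in (0,1)$, with a bound uniform in $t$ (since the right-hand side is controlled by $\norm{u}_{L^\infty(\Omega\times(0,T))} + \norm{\omega}_{L^\infty(\Omega\times(0,T))}$). This gives $u \in L^\infty((0,T);C^\alpha(\overline{\Omega}))$, and in particular $u \in L^\infty((0,T);H^{\sfrac{1}{2}}(\Omega))$ since $C^\alpha(\overline{\Omega}) \hookrightarrow H^{\sfrac{1}{2}}(\Omega)$ on a bounded Lipschitz domain for $\alpha > \sfrac{1}{2}$.

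The second step handles the pressure. Once $u(\cdot,t) \in C^\alpha(\overline{\Omega})$ for all $\alpha<1$, the tensor $u\otimes u(\cdot,t)$ lies in $C^\alpha(\overline{\Omega})$, and the pressure satisfies the Neumann problem $-\Delta p(\cdot,t) = \divergence\divergence(u\otimes u)(\cdot,t)$ in $\Omega$ with $\partial_n p(\cdot,t) = -(u\otimes u):(n\otimes\nabla)\,\cdot$ type data on $\partial\Omega$ coming from testing \eqref{Euler} against gradient fields; equivalently one uses the weak formulation of Definition \ref{d: weak solution} with $\phi = \nabla\psi$. By Schauder (or Calderón--Zygmund) estimates for the Neumann Laplacian on the smooth domain $\Omega$, together with the normalization that $p(\cdot,t)$ has zero average, one gets $p(\cdot,t) \in C^\alpha(\overline{\Omega})$ with a bound depending only on $\norm{u\otimes u(\cdot,t)}_{C^\alpha} \lesssim \norm{u}_{L^\infty(\Omega\times(0,T))}^2 + \norm{\omega}_{L^\infty(\Omega\times(0,T))}^2$, uniformly in $t$. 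Hence $p \in L^\infty((0,T);C^\alpha(\overline{\Omega}))$ as well.

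I expect the main technical obstacle to be the careful justification of Step 1: namely that the $L^\infty$ weak solution $u(\cdot,t)$, which a priori is only known to lie in $X(\Omega)$ with the normal trace interpreted distributionally, genuinely belongs to $W^{1,p}(\Omega)$ with the stated estimate. Proposition \ref{prop:wahl} as quoted requires $u \in W^{1,p}(\Omega)$ from the outset, so one must run the approximation of Lemma \ref{lemma:Lpdensity}: take $u_k \in W^{1,p}(\Omega)\cap X(\Omega)$ with $u_k \to u$ in $X(\Omega)$, apply Proposition \ref{prop:wahl} to each $u_k$ (noting $\divergence u_k \to 0$ in $L^p$), deduce that $\{u_k\}$ is Cauchy in $W^{1,p}(\Omega)$, and identify the limit with $u$. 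A minor point to check is that the distributional impermeability condition \eqref{eq: impermeability condition} for $u \in L^\infty$ is exactly the membership condition $u\cdot n = 0$ on $\partial\Omega$ used in the definition of $X(\Omega)$; this is immediate from the definition of the distributional normal trace in Section \ref{ss: boundary trace}. The secondary obstacle is ensuring all bounds are uniform in $t$, which follows because every estimate is in terms of $\norm{u}_{L^\infty(\Omega\times(0,T))}$ and $\norm{\omega}_{L^\infty(\Omega\times(0,T))}$ only.
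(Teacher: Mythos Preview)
Your proposal is correct and follows essentially the same route as the paper: approximate $u(\cdot,t)\in X(\Omega)$ via \cref{lemma:Lpdensity}, apply \cref{prop:wahl} to the approximants and pass to the limit (the paper uses lower semicontinuity of the $L^p$ norm of the gradient rather than a Cauchy argument, but this is cosmetic), then use Morrey to get $C^\alpha$. The only substantive difference is in the pressure step: the double-divergence Neumann problem on a bounded domain is not entirely covered by classical Schauder theory, and the paper instead invokes the recent boundary pressure estimates of \cites{DRLS23, DRLS24}, which yield the slightly stronger conclusion $p(\cdot,t)\in C^{1,2\alpha-1}(\Omega)$.
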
 

\begin{proof}
We fix a time $t \in (0,T)$ and we prove all the estimates with respect to the spatial variables uniformly in time. Fix $p \in (1, +\infty)$. By Lemma \ref{lemma:Lpdensity} there exists $u_\varepsilon \to u$ as $\varepsilon \to 0$ with respect to the metric $X$ introduced in \cref{lemma:Lpdensity}. Applying \cref{prop:wahl} we find an implicit constant depending only on $\Omega$ such that  
$$ \| \nabla u_\varepsilon \|_{L^p(\Omega) } \lesssim \| \curl (u_\varepsilon) \|_{L^p(\Omega)} +  \| \divergence (u_\varepsilon)\|_{L^p(\Omega)} $$
for any $\varepsilon>0$. The right hand side converges to $\| \omega \|_{L^p(\Omega)} $ and on the left hand side by the lower semicontinuity of the norm we deduce that
$$ \| \nabla u  \|_{L^p(\Omega) } \lesssim \| \omega  \|_{L^p(\Omega)}.$$
Since $p \in (1, \infty)$ is arbitrary we deduce that $u \in L^\infty((0,T); C^\alpha(\Omega))$ for any $\alpha \in (0,1)$. Then, it is immediate to deduce also that $u \in L^2((0,T); H^{\sfrac{1}{2}}(\Omega)) $. For the pressure, for any $\alpha \in \left( \sfrac{1}{2},1\right)$ by the discussion in \cite{DRLS24}*{Section 2.1} and \cite{DRLS23}*{Theorem 1.1} it follows that $p (\cdot, t) \in C^{1,2\alpha-1}(\Omega)$ and it holds 
\begin{equation}
    \norm{p(\cdot, t)}_{C^{1,2\alpha-1}(\Omega)} \lesssim \norm{u(\cdot, t)}_{C^\alpha(\Omega)}^2 \lesssim \norm{u(\cdot, t)}_{L^\infty(\Omega)}^2 + \norm{\omega(\cdot, t)}_{L^\infty(\Omega)}^2. 
\end{equation}
\end{proof}

\begin{proof}[Proof of \cref{thm:conservation-bdd-domain}]
Given $r>0$, we define $\chi_r \in W^{1, \infty} (\Omega)$ as in \cref{prop:boundary}. Pick any test function $\alpha \in C^\infty_c((0,T))$. Since $\chi_r$ is Lipschitz continuous and vanishing on $\partial \Omega$, it holds that $D[u]$ can be tested against $\phi_r(x,t) \coloneqq \chi_r(x) \alpha(t)$. Under our assumptions, by \cref{lemma:omega-smooth} and \cref{cor: conservation}, $D[u]$ is well defined and it vanishes. Then, we have that
\begin{align}
    \underbrace{\int_0^T \alpha'  \int_{\Omega} \omega \cdot u \chi_r \, dx  \, dt}_{I_{r}} + \underbrace{\int_0^T \alpha  \int_{\Omega}   (u \cdot \omega) u \cdot \nabla \chi_r \, dx  \, dt}_{II_r} + \underbrace{\int_0^T \alpha  \int_{\Omega}  \left( p - \frac{\abs{u}^2}{2} \right) \omega \cdot \nabla \chi_r \, dx  \, dt}_{III_r}  =  0. 
\end{align}
Since $\chi_r \to 0$ pointwise, $ 0 \leq \chi_r \leq 1$ and $u, \omega \in L^\infty_{x,t}$, by dominated convergence we  have
\begin{equation}
    \lim_{r \to 0} I_r = \int_0^T \alpha' (t) H(t) \, dt. 
\end{equation}
We recall that $u(t)$ is tangent to the boundary in the sense of \eqref{eq: impermeability condition} and $u(t) \in C^0(\overline{\Omega})$ for a.e. $t$ by \cref{lemma:omega-smooth}. Thus, $u(t)$ has a normal Lebesgue trace on $\partial \Omega$ according to \cref{def:normal} and by the discussion in \cref{ss: boundary trace} and \cite{CDRIN24}*{Theorem 1.4} it holds that $u(t)\cdot n \equiv 0$ on $\partial \Omega$ for a.e. $t \in (0,T)$. Since $u, \omega \in L^\infty_{x,t}$, by \cref{prop:boundary} and dominated convergence we infer that $II_r \to 0$ as $r \to 0$. Similarly, since $u(t), p(t)$ have a full trace on $\partial \Omega$ by \cref{lemma:omega-smooth} and $\omega(t)$ is assumed to have normal Lebesgue trace $\omega_n(t)$ according to \cref{def:normal}, by \cref{prop:boundary} and dominated convergence we infer that 
\begin{equation}
    \lim_{r \to 0} III_r = - \int_0^T \alpha(t) \left[  \int_{\partial \Omega} \left( \frac{\abs{u}^2}{2} - p \right) \omega_n \, d \mathcal{H}^2(x) \right] \, dt. 
\end{equation}
\end{proof}

\subsection*{Acknowledgements} MI is partially funded by the SNF grant FLUTURA: Fluids, Turbulence, Advection No.
212573. MS is supported by the Swiss State Secretariat for Education, Research and lnnovation (SERI) under contract number MB22.00034 through the project TENSE. The authors thank the anonymous referee for careful reading the paper and several useful comments.

\bibliographystyle{plain}
\bibliography{biblio}

\end{document}